\newtheorem{thm}{Theorem}[section]
\newtheorem{cor}[thm]{Corollary}
\newtheorem{lem}[thm]{Lemma}
\newtheorem{prop}[thm]{Proposition}
\theoremstyle{definition}
\theoremstyle{remark}
\newtheorem{rem}[thm]{Remark}
\newtheorem{claim}[thm]{Claim}
\newtheorem{exam}[thm]{Example}
\numberwithin{equation}{section}
\long\def\forget#1\forgotten{}
\newcommand{\mQ}{\mathbb{Q}}
\newcommand{\mC}{\mathbb{C}}
\newcommand{\mP}{\mathbb{P}}
\newcommand{\Z}{\mathbb{Z}}
\newcommand{\ra}{\to}
\newcommand\suchthat{{\,|\ }}
\DeclareMathOperator\Gal{Gal}
\def\({\left(}
\def\){\right)}
\newcommand\oline[1] {{\overline{#1}}}
\newcommand\Red{{\operatorname{Red}}}
\newcommand\R{{\operatorname{Red}}}
\newcommand\Aut{{\operatorname{Aut}}}
\newcommand\Inn{{\operatorname{Inn}}}
\newcommand\Out{{\operatorname{Out}}}
\newcommand\orb{{\operatorname{orb}}}
\newcommand\core{{\operatorname{core}}}
\DeclareMathOperator\Mon{Mon}
\DeclareMathOperator\PSL{PSL}
\DeclareMathOperator\Sym{Sym}
\DeclareMathOperator\soc{soc}
\DeclareMathOperator\PGammaL{P\Gamma{L}}
\newcommand\DN[1] {{\color{black} {#1}}}
\newcommand\JKo[1] {{\color{black} {#1}}}
\begin{document}

\title[Reducible fibers] 
{Reducible fibers of polynomial maps}%
\begin{abstract}
For a degree $n$ polynomial $f\in \mQ[x]$, the elements in the fiber $f^{-1}(a)\subseteq \mC$ are of degree $n$ over $\mQ$ for most  values $a\in \mQ$ by Hilbert's irreducibility theorem. Determining the set of exceptional $a$'s  without this property is a long standing open problem that is closely related to the Davenport--Lewis--Schinzel problem (1959) on reducibility of separated polynomials.  As opposed to previous work which mostly focuses on indecomposable $f$, we answer both problems for decomposable  $f=f_1\circ \cdots\circ f_r$, as long as the indecomposable factors $f_i\in \mQ[x]$ are of degree $\geq 5$ and are not $x^n$ or a Chebyshev polynomial composed with linear polynomials. 
\end{abstract}
\author{Joachim K\"onig}
\address{Department of Mathematics Education, Korea National University of Education, Cheongju, South Korea}

\def\technion{Department of Mathematics, Technion - Israel Institute of Technology, Israel}
\def\ann{Department of Mathematics, University of Michigan, Ann Arbor}
\author{Danny Neftin}
\address{\technion}

\maketitle
\section{Introduction}\label{sec:intro}
Given a degree $n$ (rational) map $f:X\ra\mP^1_\mQ$ (between smooth projective curves), Hilbert's irreducibility theorem \cite{Hilbert} asserts the existence of infinitely many $a\in \mQ$ for which the fiber $f^{-1}(a)\subseteq \mC$  
 is irreducible\footnote{Equivalently it asserts the existence of infinitely many $a\in \mQ$ such that  $F(a,x)\in \mQ[x]$ is irreducible for a polynomial $F(t,x)\in \mQ(t)[x]$ defining a curve birational to $X$ over $\mQ$. This is also equivalent to $\mP^1_\mQ$ having the Hilbert property, a property of high interest in recent years \DN{\cite{BSFP,CZ}}.} over $\mQ$, that is, it's  elements are of degree $n$ over $\mQ$.
\DN{Moreover}, letting $\Red_f=\Red_f(\mQ)$ denote the set of values $a\in \mQ$ over which the fiber $f^{-1}(a)$ is reducible, Hilbert's theorem asserts that $\Red_f$ is the
 union of finitely many value sets $h_i(Y_i(\mQ))$ of  nontrivial maps $h_i:Y_i\ra\mP^1_\mQ$, $i=1,\ldots,r$.  
\DN{However, determining which of the value sets $h_i(Y_i(\mQ))$ are infinite is far from known.}


\DN{For polynomial maps $f\in \mQ[x]$}, determining the infinite value sets \JKo{contributing to} $\Red_f$ is a long standing open problem of special interest,  firstly due to its relation with reducibility of separated polynomials:  for $h_i\in \mQ[x]$, the value set $h_i(\mQ)$ is contained in $\Red_f$ if and only if $f(x)-h_i(y)\in \mQ[x,y]$ is reducible.
The reducibility of such polynomials plays a key role in studying the rational points on components of the associated curves $f(x)=h(y)$ \cite{AZ2,BT,Pak, DHSWWXZ} for $h\in \mQ[y]$, a problem with a wide range of applications e.g.\ to functional equations \cite{Ritt,Fried73b,MZ}, dynamics \cite{MS,GTZ},  and complex analysis \cite{Pak2}.

Secondly, the problem arises in arithmetic dynamics in the context of stability and newly reducible values, cf.\  \cite[\S 19]{dynamics},\cite{Newly,Newly2}. 
Namely, it is unknown for which integers $m=m_f\geq 2$ there exists (resp.\ exist infinitely many) \DN{$a\in \mQ$} over which the {\it $m$-th iterate} $f^{\circ m}$ of $f$ is {\it newly reducible}, that is, the fiber of $f^{\circ (m-1)}$ over $a$ is irreducible over $\mQ$, but the fiber of $f^{\circ m}$ over $a$ is reducible over $\mQ$.
Finally, the problem also arises in the context of a prime number theorem  in short intervals for function fields \cite{BBSR} as the latter can be viewed as studying the irreducibility of a polynomial upon changing only few of its coefficients, cf.\ \cite{Mul5}. 

Most known results \DN{concern} indecomposable maps $f$, that is, maps that cannot be written as the composition of two maps of degree $>1$. 
For integral values and indecomposable $f\in \mQ[x]$ with\footnote{Moreover, by D\`ebes and Fried \cite{DF}, examples with $\deg f_1 = 5$ actually occur!} $\deg f>5$,    $\Red_f(\Z)$ is the union of a finite set with the integers $f(\mQ)\cap \Z$ in a single value set,  by Fried, cf.~\cite{Mul5}. 
In the more general case of rational values, for indecomposable $f\in \mQ[x]$ with $\deg f>20$, 
$\Red_f(\mQ)$ is the union of the single value set $f(\mQ)$ with a finite set, by M\"uller \cite{Mul} and Guralnick--Shareshian \cite{GS}, cf.\ Corollary \ref{cor:polsQ} and Theorem \ref{thm:pol-cls}. 

This paper deals with decomposable polynomial maps $f=f_1\circ\cdots\circ f_r$ when the monodromy groups of $f_i$, $i=1,\ldots,r$ are nonsolvable. 
This gives the following solution when $f\in \mQ[x]$ has no composition factor of the form $x^n$, or the (normalized) Chebyshev polynomial $T_n$. 
Here,  $T_n\in \mathbb Z[x]$ is the degree $n$ polynomial satisfying $T_n(x+1/x) = x^n + 1/x^n$ for $n\in \mathbb N$. 
\begin{thm}
\label{thm:polsQ}
Suppose $f = f_1\circ \ldots \circ f_r$ for indecomposable $f_i\in \mQ[x]$, $i=1,\ldots,r$ of degree $\geq 5$, none of which equals $\mu_1\circ x^n\circ \mu_2$ or $\mu_1\circ T_n\circ \mu_2$, for $n\in \mathbb N$ and linear $\mu_1,\mu_2\in \mathbb C[x]$. 
 If $\deg f_1>5$, then $\Red_f(\Z)$ is the union of $f_1(\mQ)\cap \Z$ and a finite set. 

 If further  $\deg f_1> 20$, 
then $\Red_f$ is the union of $\R_{f_1}$ and a finite set. In particular, 
 either $\Red_f$ is the union of $f_1(\mQ)$ with a finite set, or 
 $f_1$ is as in Table  \ref{table:two-set-stabilizer}. 
\end{thm}
A surprising part of Theorem \ref{thm:polsQ} is that \DN{if} the fiber of the first polynomial $f_1$ is irreducible over $a$, then  the fibers of the rest of the compositions  $f_1\circ \cdots \circ f_i$  remain irreducible for all but finitely many $a\in\mQ$.
In particular for $m>1$, it follows that the iterate $f^{\circ m}$ is newly reducible only over finitely many values $a\in \mQ$. 
See Theorem \ref{thm:pols} for the analogous result over other fields.  


We further apply our methods to the {\it Davenport--Lewis--Schinzel (DLS)} problem on reducibility of separated polynomials. \DN{This} problem   originates in the late 50's \cite{Sch2,Sch,DLS,DS} in view of the above relation to the curves $f(x)=h(y)$. 
It 
 seeks to determine the polynomials $f,h\in \mC[x]\setminus\mC$ 
for which \DN{$f(x)-h(y)\in\mC[x,y]$} is reducible. 
A trivial case in which $f(x)-h(y)$ is reducible is when $f$ and $h$ have a nontrivial common  left composition factor, that is, $f = g\circ f_1, h = g\circ h_1$ for $g,f_1,h_1\in\mC[x]\setminus \mC$ with $\deg g>1$. 
\DN{The problem is to find the nontrivial reducible cases}.

In case at least one of $f$ and $h$ is indecomposable, the problem is solved by Fried \cite{Fri73}, who gives the possible ramification of $f$ and $h$. The polynomials themselves are then determined by Cassou-Nogu\`es--Couveignes \cite{CCN99}. More recent progress is described in \cite[Theorem 3 and \S3]{AZ2}, \cite{Fried5,FG}, and here as well the main difficulty is the remaining case of decomposable polynomials.


Our methods give the following answer to the DLS problem when one avoids composition factors of the form $x^n$ and $T_n$: 
\begin{thm}\label{thm:DLS-pol}
Let $f,h\in \mC[x]$ be nonconstant polynomials. Assume that $f= 
f_1\circ \ldots \circ f_r$ for indecomposable $f_i\in \mC[x]$ of degree $\geq 5$, none of which is $\mu_1\circ x^n\circ \mu_2$ or $\mu_1\circ T_n\circ \mu_2$, for $n\in \mathbb N$ and linear $\mu_1,\mu_2\in \mathbb C[x]$. 
Assume further that $\deg f_1> 31$. 
Then $f(x)-h(y)\in \mC[x,y]$ is reducible 
if and only if $h=f_1\circ h'$ for some $h'\in \mC[x]$. 
\end{thm}
Note that  common composition factors of $f$ and $h$ necessarily factor through $f_1$ since, for $f_i$'s as above, the decomposition $f=f_1\circ\cdots\circ f_r$ is unique up to composition with linear polynomials  by \DN{Ritt's theorem, see Theorem \ref{thm:Ritt}}. 
We note the degree assumptions on $f_1$ can be removed in both of the above theorems at the account of a longer list of exceptions, cf.\ \S\ref{sec:poly}. 
However, different methods are required for both of the above theorems when $f_i$, $i=1,\ldots,r$ are allowed to be the composition of  $x^n$ or $T_n$ with linear polynomials, that is, 
 when $\Mon(f_i)$, $i=1,\ldots,r$ are allowed to be solvable. Moreover, in such cases $\R_f$ may consist of more than one infinite value set even when the decomposition of $f$ is unique, see Example \ref{ex:Cheb}.

The above two problems share a common ground, namely, they require determining the \DN{maps} $h:Y\ra \mP^1_k$, \DN{from $Y$ of genus $\leq 1$, whose fiber product with $f$ is reducible}. 
The key step in doing so is reducing the problem to 
determining the genus $\leq 1$ \DN{maps} whose fiber product with  $f_1$ is reducible, see \S\ref{sec:reduction}. 
This relies on a combination of Ritt's theorem, group theoretic tools, and a new relation between normal subgroups of the {\it monodromy group} of $f$ with decompositions $f=f_1\circ\cdots \circ f_r$, see Lemma \ref{lem:descent}. 


The \DN{main} property of indecomposable polynomials used in the proof is that either their monodromy groups are solvable or they are  nonabelian almost simple by Burnside's theorem on doubly transitive groups. However, our strategy applies more generally to compositions  of indecomposable maps  whose  monodromy groups 
are nonsolvable, \JKo{but have the property that all their proper quotients are solvable}. 
As opposed to M\"uller's finiteness results \cite{Mul3, Mul4} for indecomposable maps,  for such decomposable maps $\R_f$ may contain many infinite value sets. For the sake of simplicity of this paper, this is carried out separately in \cite{KN2} for integral values. 

The classification of finite simple groups (CFSG) is used in the above reduction process  only for basic assertions regarding the outer automorphism group of a simple group such as Schreir's conjecture, and for the ``further" part of Theorem \ref{thm:polsQ}  (in applying Theorem \ref{thm:nonsolv-quot}). 
The final step of classifying subcovers of the Galois closure of $f_1$ is then carried out by applying the primitive monodromy classification theorems (and hence the CFSG) for polynomials by Feit and M\"uller \cite{Mul}, and Guralnick--Shareshian \cite{GS}, see \S\ref{sec:class} and \S\ref{sec:poly}.
In light of the further development of the classification of primitive monodromy groups \cite{NZ1,FGHM, Ad}, the above results 
\DN{would hopefully} extend to rational functions, \DN{and} to general maps under group theoretic restrictions on their monodromy groups, cf.\  \cite[\S 5.2]{KN3}.

We thank \DN{Robert Guralnick} for helpful discussions. The second author was supported by the Israel Science Foundation (grants 577/15 and 353/21), the U.S.-Israel Binational Science Foundation (grant No. 2014173), and is  grateful for the hospitality of the Institute for Advanced Studies.  All computer calculations were carried out using Magma.

\section{Preliminaries}\label{sec:prelim}
\subsection{Coverings}\label{sec:cover}
Let $k$ be a field of characteristic $0$, and $\oline k$ its algebraic closure.
 An (irreducible branched) {\it covering} $f:X\to Y$ (of curves) over $k$ is a morphism of (smooth irreducible projective) curves defined over $k$. Note that as $X$ may {be} geometrically reducible (i.e., reducible over $\oline k$),  the morphism $f \times_k \oline{k}$ obtained by base change from $k$ to $\oline{k}$ may not be a covering over $\oline k$. 
 A covering $h$ is called a {\it subcover} of $f$ if $f=h\circ h'$ for some covering~$h'$. 
A covering $f$ defines a field extension $k(X)/k(Y)$ via the  injection $f^*:k(Y)\to k(X), h\mapsto h\circ f$. 
Two coverings $f_i:X_i\ra Y$, $i=1,2$ over $k$ are called ($k$-)equivalent if there exists an isomorphism
 $\mu:X_1\ra X_2$ (over $k$) such that $f_1\circ \mu = f_2$. 
Note that for two $k$-equivalent coverings, one has $f_1(X_1(k)) = f_2(X_2(k))$ and hence we may consider the value set of a $k$-equivalence class of coverings. 

Recall  that there is a correspondence between equivalence classes of coverings of $\mathbb{P}^1_k$
 and finite field extensions of $k(t)$, up to $k(t)$-isomorphisms, cf. \cite[Section 2.2]{DL}. 
In particular, letting $\tilde f:\tilde X\ra\mP^1_k$ denote the covering corresponding to the Galois closure $\Omega$ of $k(X)/k(t)$, there is a correspondence between equivalence classes of subcovers $h:Y\ra\mP^1_k$ of $\tilde f$ and subgroups $D\leq A:=\Gal(\Omega/k(t))$. 
Namely, to every such subcover the correspondence associates a subgroup $D\leq A$ (unique up to conjugation) such that $h$ is  equivalent to a covering $f_D:\tilde X/D\ra\mP^1_{k}$ whose composition with the natural projection $\tilde X\ra\tilde X/D$ is $\tilde f$. 

By the {\it genus} of $X$, we mean the genus of a geometrically irreducible component of $X$. Note that since $\oline k/k$ is Galois, it is independent of the choice of the component.

\label{sec:ram} The {\it ramification type} of a covering $f:X\to\mP^1_{\oline k}$ at a point $P\in \mP^1_{\oline k}$ is defined to be the multiset of ramification indices $\{e_f(Q/P)\suchthat Q\in f^{-1}(P)\}$, and the ramification type of $f$ is the multiset of all ramification types over all branch points of $f$. The ramification type of a geometrically irreducible covering $f$ over $k$ is  the ramification type of $f\times_k \oline{k}$.

\JKo{
\subsection{Polynomials and Siegel functions}
A {\it polynomial} covering $f:\mP^1\ra\mP^1$  is a covering  which satisfies $f^{-1}(\infty) = \{\infty\}$ over $\oline k$. In particular, on the affine line, it is given by a polynomial. 
For polynomial coverings, Fried and MacRae \cite{FM69} show that
an indecomposable polynomial  over $k$ is indecomposable  over $\oline k$. 
We shall therefore call such a polynomial simply ``indecomposable" without specifying the base field. Two polynomials $f,g\in k[x]$ are called {\it linearly related} (resp. linearly related {\it over $k$}), if there exist linear polynomials $\mu,\nu\in \overline{k}[x]$ (resp., $\in k[x]$) such that $g=\mu\circ f\circ \nu$.

A {\it Siegel function} is a covering $f:X\ra\mP^1_k$ over a number field $k$, for which $f^{-1}(O_k)$ has infinitely many $k$-rational points. 
Due to Siegel's theorem, \DN{for such a function   one has $X\cong \mathbb{P}^1_k$ and  $\infty$ has at most two preimages over $\overline{k}$. 
With a slight abuse of notation,  coverings with the latter property are also called  Siegel functions.}
}

\subsection{Monodromy} 
Let $f:X\ra\mP^1_k$ be a \DN{covering} over $k$.  Letting  $\Omega$ denote the Galois closure of $k(X)/k(t)$, 
the  {\it arithmetic (resp.~geometric) monodromy group} $A=\Mon_k(f)$ (resp.\ $G=\Mon_{\oline{k}}(f)$) of $f$ is the Galois group $\Gal(\Omega/k(t))$ (resp.~$\Gal(\oline k\Omega/\oline k(t))$) equipped with its permutation action on $A/A_1$, where $A_1=\Gal(\Omega/k(X))$. Note that since $\oline k(t)/k(t)$ is  Galois, so is $k'(t)/k(t)$ for $k'  = \oline k\cap \Omega$. Hence $G\lhd A$. Also note that $f_D$ is geometrically irreducible  if and only if
$\Omega^D/k(t)$ is linearly disjoint from $\oline k(t)$, or equivalently if $\Omega^D\cap k'(t) = k(t)$, that is, if  $D\cdot G = A$.



The following theorem  describes the structure of monodromy groups of polynomials. This classical version is essentially due to Burnside and Schur, cf.\ Section \ref{sec:class} for the full classification result of Feit and M\"uller. 
Note that the CFSG is used in the following theorem only to assert that proper quotients of an almost simple group are solvable, an assertion also known as Schreier's conjecture. 
Denote by $\soc(A)$ the socle of $A$, that is, the product of minimal normal subgroups of $A$.
When $\soc(A)$ is abelian, we say $A$ is an affine permutation group.
\begin{thm}\label{Burnside}
Let $f:\mP^1_k\ra\mP^1_k$ be a polynomial covering with monodromy group $A=\Mon_k(f)$. Then $A$ is either solvable or a  $2$-transitive nonabelian almost simple group. In the former case,  \DN{$f$ is linearly related (over $\oline k$) either to $x^n$, or to} a Chebyshev polynomial, or an indecomposable degree $4$ polynomial. In the latter case, $\soc(A)$ is primitive and  $A/\soc(A)$ is solvable. 
\end{thm}
\begin{proof}
Due to results of Burnside, see \cite{Mul6},  and Schur \cite{Schur}, any primitive group containing a full cycle is known to be either solvable or $2$-transitive; moreover, the minimal normal subgroup of a nonaffine $2$-transitive group is known to be simple and primitive due to a theorem of Burnside \cite[Theorem 7.2E]{DM}.
The indecomposable polynomial coverings $f$ with affine monodromy group $G$, and in particular those with solvable monodromy group, were essentially classified by Chisini and Ritt,  see the proof of \cite[Satz 5]{Hup} or \cite{Mul}. Finally if $A$ is almost simple, then $A/\soc(A)$ is solvable by Schreier's conjecture, which follows from the classsifcation of finite simple groups. 
\end{proof}

In case  $f$ is a Siegel function, M\"uller \cite[Theorem 3.3]{Mul2} gives the following description of $\Mon(f)$. This version requires the CFSG only for Schreier's conjecture and the following bound on orders of elements in the outer automorphism group $\Out(S)$ of a nonabelian simple group $S$ other than $A_5$ and $\PSL_2(7)$:
\begin{equation}\label{equ:o-bound}
 o(\Out(S)) < \frac{\#S}{2o(S)^2}, 
 \end{equation}
 where $o(G)$ denotes the maximal order of an element in a group $G$.  
\begin{thm}
\label{thm:monsiegel}
Let $f:\mP^1_k\ra \mP^1_k$ be a indecomposable Siegel function with nonaffine monodromy group $A=\Mon_k(f)$. 
Then  $A$ is either nonabelian almost simple, or $A\leq (\Aut(S)\times \Aut(S))\rtimes C_2$ contains $S^2$ as a unique minimal normal subgroup for a nonabelian  simple group $S$. In particular, the proper quotients of $A$ are solvable. 
\end{thm}
\begin{proof}
We note how the proof of \cite[Theorem 3.3]{Mul2} adjusts to give this version without relying on the CFSG. Assume $A$ is not almost simple. As in addition $A$ is not affine,  the remaining possibilities for $A$ have product action, or regular normal subgroup action, or diagonal action in the terminology of \cite[\S 2]{Mul2}. Without invoking the CFSG,   \cite[\S 3.4.1]{Mul2} shows that, for $A$ of product action, one has $S^2\lhd A\leq (U\times U)\rtimes C_2$ where $U$ is a primitive   group of degree $r$ which contains an $r$-cycle and has socle $S$. Since $A$ is nonaffine, $S$ is nonabelian and Burnside's theorem shows that $U$ is a $2$-transitive almost simple group, so that $S$ is simple and $U\leq \Aut(S)$, yielding the desired conclusion in the product type case.

Without invoking the CFSG,  \cite[\S 3.5]{Mul2} shows that $A$ cannot have a regular normal subgroup:  More precisely, it shows  that in such case $A$ must be a subgroup of $H^m\rtimes S_m$ equipped with a product action for $H$ which is not $2$-transitive, contradicting the above  conclusion in the product type case. Finally, \cite[\S 3.6]{Mul2} shows that $A$ cannot have a diagonal action using the CFSG only when applying \eqref{equ:o-bound}. 

In total it follows that $A$ has a unique minimal normal subgroup $\soc(A)=S^t$ for a simple group $S$ and $t\in\{1,2\}$, and hence the quotient $A/\soc(A)$ is a subgroup of $\Out(S)^t\rtimes S_t$, $t\leq 2$, which is solvable by Schreier's conjecture. 
\end{proof}

For coverings of genus at most  $1$ the following analogous  assertion is shown in Appendix \ref{app:Shih} using the classification of monodromy groups and hence the CFSG. 
\begin{thm}\label{thm:nonsolv-quot}
Suppose $f:X\ra \mP^1_{\oline k}$ is an indecomposable covering of genus $g_X\leq 1$ and nonaffine monodromy group $G$. 
Then $G/\soc(G)$ is solvable. 
\end{thm}

\subsection{Specializations} \label{sec:spec}
\DN{To a covering $f:X\ra\mP^1_k$, one associates an irreducible polynomial $F\in k(t)[x]$ such that the curve $F(t,x)=0$ is birational to $X$. Further, we replace $F$ by a polynomial $F\in k[t,x]$  by multiplying it with an element of $k(t)$. Note that after these operations, the resulting set of values $t_0\in k$ for which $F(t_0,x)$ is reducible differs from $\Red_f(k)$  only in finitely many values. }

We next recover a well known criterion for the reducibility of $F(t_0,x)$. 
Let $\Omega$ be the splitting field of $F$ over $k(t)$, so that $A=\Gal(\Omega/k(t))$ is the arithmetic monodromy group of $f$. 
A well known fact from algebraic number theory \cite[Lemma 2]{KN18},  asserts that for every $t_0\in k$ which is neither a root nor a pole of the discriminant $\delta_F\in k(t)$ of $F$, the splitting field  $\Omega_{t_0}$ of $F(t_0,x)$ is Galois, and its Galois group is identified as a permutation group with a  subgroup $D\leq A$, unique up to conjugation, known as the decomposition group at \DN{$t\mapsto t_0\in k$}. 
Moreover,   $\Omega^D$ has a degree $1$ place $P$ over $t_0$. 
In particular,  $\Omega^D\cap \oline k(t) = k(t)$,
so that 
the corresponding morphism  
$f_D:X_D\ra\mP^1_k$ is  geometrically irreducible, and $DG=A$. 
The  place $P$ corresponds to a $k$-rational point  $P\in X_D(k)$ such that $f_D(P) = t_0$. 
Since  $D$ and $\Gal(\Omega_{t_0}/k)$ are isomorphic as permutation groups,  $F(t_0,x)$ is reducible if and only if $D$ is intransitive.
\DN{In total:
\begin{prop}
\label{prop:spec}
Let $f:X\ra \mP^1_k$ be a covering with arithmetic and geomertric monodromy groups $A$ and $G$, respectively. 
Let $D=D_{t_0}$ be the  decomposition group at $t\mapsto t_0$, and $f_D:X_D\to \mathbb{P}^1_k$ its corresponding covering.
Then: 
\begin{enumerate} 
\item 
$t_0\in f_D(X_D(k))$ and $DG = A$ for all but finitely many $t_0\in k$; 
\item For all but finitely many $t_0\in k$, $t_0\in \Red_f(k)$ if and only if $D$ is intransitive.
\end{enumerate}
\end{prop}
}

Proposition \ref{prop:spec} implies that $\R_f$ is the union of $\bigcup_D  f_D(X_D(k))$ with a finite set, where $D\leq A$ runs over maximal intransitive subgroups with $DG=A$. 
If $X_D(k)$ is infinite and $k$ is a finitely generated field,  Faltings' theorem implies that  $g_{X_D}\leq 1$. 
Similarly if $k$ is a number field with ring of integers $O_k$ and $f_D(X_D(k))\cap O_k$ is infinite, then Siegel's theorem implies that $f_D$ is a Siegel function. \DN{This is the step in which the finite set of exceptions is no longer explicit; Computing it is in general hopeless since the curves $X_D$ are arbitrary.  We therefore have:}
\begin{cor}
\label{cor:kronecker}\label{cor:faltings}
Let $f:X\to\mP^1_k$ be a covering over a finitely generated field $k$ with arithmetic (resp.~geometric) monodromy $A$ (resp.~$G$). 
Then $\R_f$ and $\bigcup_D f_D(X_D(k))$ differ by a finite set, where $D$ runs over maximal intransitive subgroups of $A$ with $g_{X_D}\leq 1$ and $DG =A$. 

Similarly, if $k$ is a number field and $O_k$ is its ring of integers, then $ \R_f(O_k)$  and $\bigcup_D \left(f_D(X_D(k))\cap O_k\right)$ differ by a finite set, where $D$ runs over maximal intransitive subgroups of $A$ such that $DG=A$ and $f_D$ is a Siegel function. 
\end{cor}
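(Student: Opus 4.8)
The plan is to formalize the paragraph preceding the statement, where the argument is already sketched. The starting point is the reduction recorded there: by Proposition \ref{prop:spec}, $\R_f$ agrees up to a finite set (the roots and poles of $\delta_F$, together with the discrepancy between $\R_f$ and $\Red_F$) with $\bigcup_D f_D(X_D(k))$, the union running over the maximal intransitive subgroups $D \le A$ with $DG = A$; since $A$ is finite this is a union of finitely many terms. The corollary is then obtained by discarding from this finite union those terms that the finiteness theorems force to be finite.

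For the first assertion: whenever $g_{X_D} \ge 2$, Faltings' theorem --- in the form valid over finitely generated fields of characteristic zero --- shows that $X_D(k)$, and hence $f_D(X_D(k))$, is finite. As only finitely many $D$ occur, removing the indices with $g_{X_D} \ge 2$ changes the union by a finite set, leaving exactly the $D$ with $g_{X_D} \le 1$, which is the claim.

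For the second assertion: by the same reduction, $\R_f \cap O_k$ agrees up to a finite set with $\bigcup_D\bigl(f_D(X_D(k)) \cap O_k\bigr)$ over the same finite family of $D$'s. Fix such a $D$, pick an affine model of $X_D$ on which $f_D$ is regular away from $f_D^{-1}(\infty)$, and enlarge $O_k$ to a ring $O_S$ of $S$-integers for a finite set $S$ of places absorbing the primes dividing the denominators of $\delta_F$ and of a chosen affine equation for $f_D$. Then $\{P \in X_D(k) : f_D(P) \in O_k\}$ is contained in the set of $O_S$-integral points of the affine curve $X_D \setminus f_D^{-1}(\infty)$; by Siegel's theorem the latter is finite unless $g_{X_D} = 0$ and $\card{f_D^{-1}(\infty)} \le 2$, i.e.\ unless $f_D$ is a Siegel function. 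Hence only Siegel-function indices $D$ can make $f_D(X_D(k)) \cap O_k$ infinite, and deleting the remaining (finitely many) indices changes the union by a finite set.

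The argument presents no genuine obstacle. The only point I would flag is the bookkeeping of the finite exceptional set of places in the integral case: one must check that replacing $O_k$ by the larger ring $O_S$ alters both $\R_f \cap O_k$ and each $f_D(X_D(k)) \cap O_k$ only by a finite set, so that Siegel's theorem may be applied directly to $O_S$-integral points; since finitely many $D$ are involved and every such adjustment is finite, the final conclusion is still "up to a finite set".
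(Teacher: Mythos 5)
Your proposal is correct and follows essentially the same route as the paper: the paper's entire argument is the paragraph preceding the corollary, which reduces $\R_f$ via Proposition \ref{prop:spec} to the finite union $\bigcup_D f_D(X_D(k))$ over maximal intransitive $D$ with $DG=A$, then discards the terms forced to be finite by Faltings' theorem (genus $\geq 2$) and, in the integral case, by Siegel's theorem (non-Siegel functions). Your extra care with the $S$-integer bookkeeping for Siegel's theorem is a detail the paper leaves implicit but does not change the argument.
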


\begin{exam}\label{ex:Cheb}
Let $k:=\mQ(e^{2\pi i/8})$, and 
$f(x):=T_4(x) \in k[x]$. We will show that (1) $\Red_f$ is the union of $f_1(\mQ)\cup h(\mQ)$ with a finite set,
where $f_1(x)=T_2(x)$ and $h(x)=-T_4(x)$. 
Furthermore,  (2) $f_1$ is the the unique indecomposable subcover of the natural projection $f:\mP^1_k\ra \mP^1_k$, $x\mapsto T_4(x)$. 
Since $f_1$ is of degree $2$, it is Galois, and hence $h$ does not factor through $f_1$. 
As pointed out in Section \ref{sec:intro}, this shows that the nonsolvability assumption in Theorem \ref{thm:polsQ} is necessary. 

To show (1) and (2), first note that the Galois closure  of $f$ is the covering $\tilde f:\tilde X\ra\mP^1_k$ by $\tilde X\cong \mP^1_k$ given by  $x\mapsto (x+1/x)^4$, so that $\tilde f = f\circ (x+1/x)$. The arithmetic and geometric monodromy groups $A$ and $G$ of $f$ are the dihedral group $D_4$ of order $8$ equipped with its standard degree $4$ action.
Let $s$ be the automorphism  of $\tilde X$ given by $x\mapsto 1/x$, so that $f$ is equivalent to the subcover $f_{s}: \tilde X/\langle s\rangle\ra\mP^1_k$. 
We next deduce (1) and (2) from:
\begin{claim}\label{claim:sr} $h$ is equivalent to the covering $f_{\langle sr\rangle}:\tilde X/\langle sr\rangle \ra\mP^1_k$. 
\end{claim}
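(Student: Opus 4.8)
The plan is to work entirely inside the genus-zero function field picture $\oline k(x) = \oline k(\tilde X)$, where $\tilde X \cong \mP^1$, and to identify $\oline k(Y) = \oline k(h) = \oline k(y)$ with the appropriate fixed field inside it. Concretely, $h$ is the covering $x \mapsto -T_4(x)$, i.e. $\oline k(\mP^1_h)$ corresponds to the subfield $\oline k(-T_4(y))$ of $\oline k(y)$. First I would write down, as in the discussion of $\tilde f$, the realization of $\oline k(\tilde X)$ over $\oline k(t)$: setting $z = x + 1/x$, one has $\tilde f = f\circ(z)$ with $f(z)-t = T_4(z)-t$, and $t = (x+1/x)^4/\!$-type formula — more precisely $t = T_4(z)$ and the Galois group $A = G = D_4$ acts on $\oline k(x)$ via the automorphism $s\colon x\mapsto 1/x$ (which fixes $z$, so generates $\Gal(\oline k(x)/\oline k(z))$) together with the order-$4$ rotation $r$ realizing $\Gal(\oline k(z)/\oline k(t))$ as a cyclic group of order $4$. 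The key computation is to pin down $r$ explicitly as a fractional-linear (or Möbius) automorphism of $\oline k(x)$ lying over the degree-$4$ cyclic action on $z$-space; here is exactly where the base field $k = \mQ(e^{2\pi i/8})$ enters, since the required root of unity must be rational over $k$.

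Next I would compute the fixed field $\oline k(x)^{\langle sr\rangle}$. Since $\langle sr\rangle$ has order $2$ (as $sr$ is a reflection in $D_4$), this fixed field is a rational subfield $\oline k(w)$ of index $2$, and $w$ can be produced as the symmetrized generator $w = x + (sr)(x)$ or $w = x\cdot (sr)(x)$, whichever is invariant and generates. Then the covering $f_{sr}$ is the map $\oline k(w) \hookrightarrow$ [its image in] $\oline k(t)$, i.e. expressing $t$ as a rational function of $w$. The heart of the claim is the identity
\[
f_{sr}\ \text{is equivalent to}\ \bigl(w \mapsto -T_4(w)\bigr),
\]
which amounts to checking that $t$, written in terms of $w$, equals $-T_4$ of a uniformizer of $\oline k(x)^{\langle sr\rangle}$ after a linear change of variable — equivalently, that the ramification type of $f_{sr}$ over $\mP^1_t$ matches that of $-T_4$ and that the two degree-$4$ covers of $\mP^1$ with $D_4$ Galois closure and a point-stabilizer conjugate to $\langle sr\rangle$ coincide. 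Since two reflections $s$ and $sr$ in $D_4$ generating non-conjugate subgroups give the two distinct (up to equivalence) index-$4$ subcovers of $\tilde f$, and $f = f_s$ is already the $T_4$ one, $f_{sr}$ must be "the other" degree-$4$ subcover; recognizing that "the other one" is precisely $-T_4$ uses the classical fact that $-T_4(x) = T_4(\zeta_8\, x')$-type relation, or more simply that $T_4$ and $-T_4$ are the two degree-$4$ Chebyshev-type covers sharing the Galois closure $z\mapsto T_4$-squared tower.

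The main obstacle I anticipate is the explicit bookkeeping of the $D_4$-action on $\oline k(x)$: one must choose generators $s, r$ so that $\langle s\rangle$ is the point-stabilizer giving $f$ (this is forced: $s\colon x\mapsto 1/x$), and then correctly locate $r$ so that $\langle sr\rangle$ — rather than $\langle sr^2\rangle = \langle s\rangle^{r}$-type conjugates or $\langle r^2\rangle$ — is the subgroup whose fixed field yields $-T_4$. Getting the cocycle/descent bookkeeping right over $k$ (as opposed to $\oline k$) is the subtle point, and this is why the hypothesis $k = \mQ(e^{2\pi i/8})$ is imposed: it guarantees $r$, and hence $f_{sr}$, is defined over $k$, so that Remark \ref{rem:geo-irr} applies and $h = f_{sr}$ as coverings over $k$ (not merely over $\oline k$). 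Once the explicit automorphisms are fixed, verifying $t = -T_4(w)$ up to linear equivalence is a routine but slightly lengthy rational-function manipulation, which I would not grind through here; the structural input is that there are exactly two index-$4$ subcovers and the non-$T_4$ one is the degree-$2$-factorizable map $-T_4$.
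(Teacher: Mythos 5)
Your overall strategy --- computing the fixed field of $\langle sr\rangle$ inside $\oline k(x)$ and checking directly that $t$ equals $-T_4$ of a generator --- is genuinely different from the paper's argument and is viable: for the record, the computation does close. Writing $\tilde f$ as $x\mapsto x^4+x^{-4}$ with $r\colon x\mapsto ix$ and $s\colon x\mapsto 1/x$, the substitution $x=\zeta_8 u$ turns $sr\colon x\mapsto -i/x$ into $u\mapsto -1/u$, and with $w=i(u-1/u)$ one finds $t=-(u^4+u^{-4})=-T_4(w)$, which alone proves the claim. The paper instead avoids all explicit function-field computation: it embeds everything into the Galois cover $\hat f=x^2\circ\tilde f$ with group $D_8$, uses $x^2\circ f=x^2\circ h$ to conclude that $h$ corresponds to a subgroup $H\leq A$ conjugate to $\langle s\rangle$ inside $D_8$, and then rules out conjugacy inside $A$ by comparing ramification over $t=2$.

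The genuine gap is that you skip the one verification that carries all the content, and the ``structural'' argument you substitute for it does not close. To conclude that $f_{sr}$ ``must be the other one,'' you need two facts you never establish: (i) that $h=-T_4$ is a subcover of $\tilde f$ at all (this is exactly what the paper extracts from $x^2\circ f=x^2\circ h$), and (ii) that $h$ is not equivalent to $f=T_4$ itself, i.e.\ that no linear $\mu$ satisfies $T_4\circ\mu=-T_4$ (the paper's $t=2$ argument: $T_4\circ\mu$ has an unramified preimage of $2$, while $-T_4$ has ramification $[2,2]$ there). Without both, ``the other one is $-T_4$'' is an assertion, not a deduction. A smaller inaccuracy: $D_4$ has three, not two, conjugacy classes of order-$2$ subgroups --- $\langle s\rangle$, $\langle sr\rangle$, and the central $\langle r^2\rangle$ --- so $\tilde f$ has three inequivalent degree-$4$ subcovers; the Galois one $f_{r^2}$ is easily excluded since $-T_4$ is not a Galois covering, but your count should acknowledge it. Either carry out the fixed-field computation explicitly, or supply the two missing facts (i) and (ii); as written, neither route is complete.
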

By Corollary \ref{cor:faltings}, it suffices to find the maximal intransitive subgroups $D\leq A$ for which $g_{X_D} \leq 1$ and $DG = A$. 
However, since $\tilde X$ is of genus $0$ and $G=A$, the last two conditions are immediate. 
Up to conjugacy the maximal intransitive subgroups of $D_4$ are $\langle sr\rangle$, and $\langle s,r^2\rangle$. Since $U:=\langle s,r^2\rangle$ is the only intermediate subgroup $\langle s\rangle \leq U\leq D_4$, we deduce that $f_U$ is equivalent to $f_1$, showing that  (1) follows from Claim \ref{claim:sr}.  Since the only proper subgroup of $D_4$ which contains $\langle sr\rangle$ is $\langle sr,r^2\rangle$ and it is not conjugate to $U$, (2) follows. 

It remains to prove Claim \ref{claim:sr}. Note that the composition $\hat f:=T_2\circ \tilde f:\tilde X\ra\mP^1_k$ is a Galois covering with arithmetic monodromy group $D_8$ containing  $A=D_4$ as a subgroup. Since $sr\in A$ and $s\in A$ are conjugate in $D_8$, the coverings $T_2\circ f_{\langle sr\rangle}$ and $T_2\circ f_{\langle s\rangle}$ are equivalent in $D_8$. 
However, since $\langle sr\rangle$ and $\langle s\rangle$ are not conjugate in $A$, $f_{\langle sr\rangle}$ is not equivalent to $f_{\langle s\rangle}$. 
As two covering which are inequivalent but whose compositions with $T_2$ are equivalent,  $f_{\langle sr\rangle}$ is equivalent to $-f_{\langle s\rangle}$: 
$x\mapsto -T_4(x)$.
\end{exam}
For an example over $\mQ$, see \cite[\S2]{Fried100},\cite[Chp.~13, Ex.~1]{FJ}.

\subsection{Decomposable coverings}\label{sec:wreath} Let  $f:Y\to X$ and $h:X\to\mP^1$ be two coverings over $k$ of degrees $m,n$ and monodromy groups \DN{$U\leq \Sym(I),V\leq \Sym(J)$} with point stabilizers $U_1,V_1$, respectively. Then the monodromy group $A$ of $h\circ f$ is naturally a subgroup of the wreath product $U\wr_J V := U^J\rtimes V$, where the semidirect product action of $V\leq \Sym(J)$ permutes the $J$-copies of $U$.  
The action of $A$ is the natural imprimitive degree $mn$ action of $U\wr_J V$ \DN{on $I\times J$ with blocks indexed by $J$}. 

We note two further properties of such monodromy groups $A\leq U\wr_J V$. 
Letting $\Omega_X$ denote the Galois closure of $k(X)/k(\mP^1)$,  the restriction map surjects onto $V=\Gal(\Omega_X/k(\mP^1))$, that is, (1) the projection modulo $U^J$  maps $A$ onto $V$.  
Letting $\Omega$ denote the Galois closure of $k(Y)/k(\mP^1)$ and   $A_0:=A\cap (U^{J}\rtimes \Sym(J\setminus\{0\})$ be the stabilizer of a block $0\in J$,   (2) $A_0$ maps onto $U$ under the projection to the $0$-th coordinate. 
We shall use the following assertions on  decompositions of polynomials:
\begin{lem}\label{lem:pol-kernel}
Suppose $f = u\circ v$ for polynomial coverings $u,v$ of degrees $m,n$, respectively,  then the kernel of the natural projection $\pi$ from $\Mon_k(f)\leq S_n \wr S_m$  to $\Mon_k(u)\leq S_m$ is nontrivial.
\end{lem}
\begin{proof}
Letting $\tilde u$ be the Galois closure of $u$, 
Abhyankar's lemma implies that $e_{\tilde u}(Q/\infty) = m$ for every $Q\in \tilde u^{-1}(\infty)$. Since $e_f(\infty/\infty) = mn$, it follows that $f$ is not a subcover of the Galois closure $\tilde u$ of $u$, and hence the kernel of $\pi$ is nontrivial.  
\end{proof}
The uniquness of decompositions of a polynomial with nonsolvable composition factors is given by Ritt's theorems \cite{Ritt,MZ}. Due to subsequent work of Fried and MacRae \cite[Theorem 3.5]{FM69}, the linear polynomials in the following theorem may even be assumed to be over $k$.
\begin{thm}\label{thm:Ritt} 
Suppose $f=f_1\circ \cdots\circ f_r$  for indecomposable $f_i\in k[x]$ with nonsolvable monodromy group. Then for every decomposition $f=g_1\circ\cdots\circ g_s$ into indecomposables, one has $s=r$ and $g_i = \mu_i\circ f_i\circ\mu_{i-1}$ for linear polynomials $\mu_i\in k[x]$, $i=1,\ldots,r$ with $\mu_0 = \mu_r = id$. 
\end{thm}

\subsection{Fiber products and pullbacks}\label{sec:fiber}
Let $\tilde f:\tilde X\ra Y$ be a Galois covering over $k$ with arithmetic monodromy group $A$. 
Let  $A_1,H\leq A$ be subgroups and   
$f_{A_1}:\tilde X/A_1\ra Y$ and $f_H:\tilde X/H\ra Y$ the corresponding coverings, respectively. 
Setting $X:=\tilde X/A_1$ and $Z:=\tilde X/H$, we denote by $X\# Z$ the (normalization of the) {\it fiber} product of $f_{A_1}$ and $f_H$.

\begin{rem}\label{rem:fiber}
The irreducibility of  $X\# Z$ is equivalent to the linear disjointness of the function fields $k(X)$ and $k(Z)$ over $ k(Y)$, 
which in turn is equivalent to the transitivity of $H$ on $A/A_1$, that is,  $HA_1 = A$. 
When these conditions hold, the natural projection $X\# Z\ra Y$ is equivalent to the covering 
$f_{H\cap A_1}:\tilde X/(H\cap A_1)\ra Y$.
\end{rem}

\begin{lem}\label{Fried}
Let $f:X\ra Y$ and $h:Z\ra Y$ be coverings with reducible fiber product. 
Then  $f=f_0\circ f_1$ where $f_0$ is a subcover of the Galois closure $\tilde h$ whose fiber product with $h$ is reducible. 
\end{lem}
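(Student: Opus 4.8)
The plan is to translate everything into the Galois-theoretic dictionary supplied by Remark~\ref{rem:fiber} and then read off the desired factorization from subgroup containments. Let $\tilde X\ra Y$ be a Galois covering dominating both $f:X\ra Y$ and $h:Z\ra Y$, with arithmetic monodromy group $A=\Gal(\overline k(\tilde X)/\overline k(Y))$ (work geometrically, replacing $k$ by $\overline k$, since irreducibility of a fiber product is a geometric condition). Write $X=\tilde X/A_1$ and $Z=\tilde X/H$ for subgroups $A_1,H\le A$, and let $N=\core_A(H)=\bigcap_{a\in A}H^a$ be the normal core, so that $\tilde h:\tilde X/N\ra Y$ is (a model of) the Galois closure of $h$. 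Reducibility of the fiber product $X\#Z$ means, by Remark~\ref{rem:fiber}, that $HA_1\neq A$; equivalently, $A_1$ is contained in the proper subgroup $HA_1\subsetneq A$.

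\medskip

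Next I would produce the intermediate cover. Set $B:=NA_1\le A$. Since $N\lhd A$, the product $B=NA_1$ is a subgroup, and $A_1\le B$, so $f_B:\tilde X/B\ra Y$ is a subcover of $f=f_{A_1}:\tilde X/A_1\ra Y$; thus $f=f_0\circ f_1$ with $f_0:=f_B$ and $f_1:\tilde X/A_1\ra \tilde X/B$ the natural projection. Because $N\le B$, the covering $f_0=f_B$ factors through $\tilde X/N\ra Y$, i.e.\ $f_0$ is a subcover of the Galois closure $\tilde h$, which is the first required property. It remains to check that the fiber product of $f_0$ with $h$ is reducible; by Remark~\ref{rem:fiber} applied to the pair $(f_B,f_H)$ this is exactly the statement $HB\neq A$. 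But $HB=H(NA_1)=HA_1$ (using $N\le H$, so $HN=H$), and $HA_1\neq A$ is our hypothesis. Hence $f_0\#Z$ is reducible, completing the verification.

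\medskip

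Two bookkeeping points finish the argument. First, descent from $\overline k$ to $k$: the subgroups $A_1,H$ and hence $B=NA_1$ are stable under the relevant Galois action because $f,h$, and therefore $\tilde h$ and all the intermediate covers, are defined over $k$; so $f_0$ and $f_1$ are coverings over $k$, and the statement holds as written. Second, one should note that $f_1$ is a genuine (possibly trivial) covering — if $HA_1=A$ were to hold the construction would degenerate, but that case is excluded by hypothesis, and if $B=A_1$ then $f_1$ is an isomorphism and $f=f_0$ is itself a subcover of $\tilde h$, which is still a valid instance of the conclusion.

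\medskip

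The only real subtlety — and the step I would be most careful about — is making sure the two occurrences of Remark~\ref{rem:fiber} are applied to the correct pairs of subgroups, since the roles of ``$A_1$'' and ``$H$'' there are asymmetric (transitivity of $H$ on $A/A_1$ versus transitivity of $A_1$ on $A/H$); one must check that irreducibility of $X\#Z$ as defined is symmetric in the two factors, which follows from the symmetric characterization via linear disjointness of $k(X)$ and $k(Z)$ over $\overline k(Y)$, or equivalently from $HA_1=A\iff A_1H=A$. Everything else is a routine manipulation of normal cores and products of subgroups; no genus or arithmetic input is needed, in keeping with the group-theoretic character of the surrounding results.
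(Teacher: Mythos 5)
Your argument is essentially the paper's proof: both take the normal core $N=\core_A(H)$ of the subgroup fixing $Z$ in a common Galois closure, form the subgroup $NA_1$ (the paper's $UN$), and observe that $H\cdot(NA_1)=HA_1\neq A$ gives both the factorization through $\tilde h$ and the reducibility of the fiber product of $f_0$ with $h$. The one wrinkle is your opening move of base-changing to $\oline k$ "since irreducibility of a fiber product is a geometric condition": that direction of descent is false in general (reducibility over $\oline k$ does not imply reducibility over $k$), and Remark~\ref{rem:fiber} phrases the criterion via the \emph{arithmetic} monodromy group precisely so that no base change is needed; running your identical subgroup computation in the arithmetic group $A$ over $k$ — as the paper does — removes the issue entirely.
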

\begin{proof}
Let $g:Z\ra Y$ be a common Galois closure for $f$ and $h$, let $A$ be its (arithmetic) monodromy group, and assume $f\sim g_U$, $h\sim g_V$, and $\tilde h\sim g_N$ 
for $U,V,N\leq A$ with $N = \core_A(V)\lhd A$. 
Since the fiber product of $f$ and $h$ is reducible, $UV\neq A$. 
Since $N\lhd A$, the set $UN$ is a group, and as $U\leq UN$, $f$ factors through  $f_0 :=  g_{UN}:Z/(UN)\ra Y$. Since $UN\leq UV<A$, we have $\deg f_0>1$. Since $UN\cdot V = UV<A$, the fiber product of $f_0$ and $h$ is reducible. 
\end{proof} 

The {\it pullback} of $f$ along $h$ is the natural projection $f_h:W\ra Z$ from $W:=X\#Z$.
\DN{\begin{lem}\label{lem:pull-closure}
Let $f:X\ra Y$ be a covering with Galois closure $\tilde f:\tilde X\ra Y$, and $h:Z\ra Y$ a subcover of $\tilde f$ whose fiber product with $f$ is irreducible. Then the Galois closure of the pullback $f_h$ is equivalent to the projection $\tilde X\ra Z$. 
\end{lem}
\begin{proof}
First replace $h$ by an equivalent subcover  $f_H:\tilde X/H\ra Y$ for some $H\leq \Mon_k(f)$. 
Since $X\#_Y Z$ is irreducible, $H$ is transitive and $f_h$ is equivalent to the projection $\tilde X/(H\cap G_1)\ra \tilde X/H$ by Remark \ref{rem:fiber}, where $G_1\leq \Mon_k(f)$ is a point stabilizer. Moreover, the transitivity of $H$ implies that 
$$\bigcap_{x\in H}(H\cap G_1)^x \subseteq \bigcap_{x\in H} H^x =\bigcap_{g\in G}H^g = 1.$$ Thus the action of $H$ on $H/H\cap G_1$ is faithful, so that there are no nontrivial Galois covers between $\tilde X\ra \tilde X/H$ and $f_h:\tilde X/(H\cap G_1)\ra\tilde X/H$.
\end{proof}}
\begin{rem}\label{rem:pullback}
Assume that  $W=X\# Z$ is irreducible, and let $\tilde f_h:\tilde W\ra Z$ be the Galois closure of $f_h$, and $\Gamma = \Mon_k(f_h)$. 
Then we may identify $\Gamma$ with a (transitive) subgroup of $A$ via the following embedding.
Since $k(W)$ is the compositum of $k(X)$ and $k(Z)$ by Remark \ref{rem:fiber},  the Galois closure $\Omega_{W}$ of $k(W)/k(Z)$ is the compositum of the Galois closure $\Omega_X$ of $k(X)/k(Y)$ with $k(Z)$. Thus $\Gamma = \Gal(\Omega_W/k(Z))$ is isomorphic, via restriction, to $\Gal(\Omega_X/\Omega_X\cap k(Z))\leq A$. 
\end{rem}

\section{Normal and transitive subgroups of imprimitive groups}
\label{sec:toolkit}
Throughout this section, we consider subgroups $G$ of the wreath product $U\wr_J V$, for finite permutation groups $U$ and $V$, with $V$ acting on a set $J$.
\subsection{Normal subgroups}
We start by describing the minimal normal subgroups of $G$.
The following is \DN{essentially} in  \cite{AS}: 
\begin{lem}\label{carmel}
Let $G\leq U\wr_J V$ be a subgroup whose natural projection to $V$ is onto, whose block stabilizer projects onto $U$, and assume $V$ acts transitively on $J$. 
Assume $U$ is primitive with \DN{a unique minimal normal subgroup} $\soc(U)\cong L^I$ for a nonabelian simple group $L$, and 
 $K:=G\cap U^J$ is nontrivial.
\DN{Then $G$ acts transitively on a partition $O_1,\ldots,O_r$ of $I\times J$ such that
$K\cap L^{O_j}\cong L$, $j=1,\ldots,r$ and   $$\soc(K) = K\cap \soc(U)^J \cong (K\cap L^{O_1})\times \cdots \times (K\cap L^{O_r}).$$}
\end{lem}
The proof relies on the following observations and lemma:
\begin{rem}\label{rem:projections}
Suppose $K\leq U^J$ and $V$ is a group of outer automorphisms of $K$ acting transitively by permuting $J$. Then (a) the images of projections $\pi_j:K\ra U$ to the $j$-th coordinate, for $j\in J$, are all isomorphic; and (b) if furthermore $K\neq 1$, $\pi_j(K)\lhd U$, and $U$ has a unique minimal normal subgroup, then  $\pi_j(K)\supseteq \soc(U)$ for all $j\in J$. 

To see (a), let $v_j\in V$ be an automorphism which sends $j$ to $1$, and observe that $\pi_1(K)=\pi_1(v_j(K))\cong \pi_j(K)$ for all   $j\in J$. To see (b), note that since $\soc(U)$ is the unique minimal normal subgroup of $U$ and $\pi_j(K)\lhd U$, the images $\pi_j(K)$, $j\in J$ either contain $\soc(U)$ or are all $\{1\}$. However, the latter does not occur since  $K\neq 1$.
\end{rem}
\DN{The following lemma is a version of the well known Goursat lemma:}
\begin{lem}
\label{lem:gas}\cite[(1.4)]{AS}
Let $L$ be a finite nonabelian simple group, $I$ a finite set, and $K$ a subgroup of $L^I$ which surjects onto $L$ under each projection $\pi_i:K\ra L$ to the $i$-th component for all $i\in I$. 
Then $K$ decomposes as $(K\cap L^{O_1})\times \cdots \times (K\cap L^{O_r})$ 
where 
$O_1,\ldots,O_r$ is a partition of $I$, and $K\cap L^{O_j}\cong L$ for all $j=1,\ldots,r$.
\end{lem}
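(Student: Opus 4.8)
\textbf{Proof proposal for Lemma \ref{lem:gas}.}

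The plan is to induct on $\card{I}$, the base case $\card{I}=1$ being trivial since then $K\leq L$ surjects onto $L$, forcing $K=L$ and $r=1$, $O_1=I$. For the inductive step, fix an index $i_0\in I$ and consider the projection $\pi_{i_0}\colon K\ra L$, which is surjective by hypothesis. Write $J = I\setminus\{i_0\}$ and let $N = K\cap L^{J} = \ker(\pi_{i_0})$, viewed as a subgroup of $L^J$ (sitting inside $K$ as those tuples with $i_0$-coordinate trivial). The first key point is to understand $N$: it is a normal subgroup of $K$, hence its image $\pi_j(N)$ under each projection $\pi_j$, $j\in J$, is normalized by $\pi_j(K) = L$; since $L$ is simple this image is either trivial or all of $L$. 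So $J$ splits into the set $J_1$ of indices where $\pi_j(N)=L$ and the set $J_0$ where $\pi_j(N)=1$; then $N\leq L^{J_1}$ and $N$ surjects onto $L$ under every projection to a coordinate in $J_1$, so by the inductive hypothesis $N = \prod_{j} (N\cap L^{O_j})$ for a partition $\{O_j\}$ of $J_1$ with each factor $\cong L$.

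The second key point is to control the "diagonal glueing" between the $i_0$-coordinate and the rest. Consider the subgroup $K_0 = \{k\in K : \pi_j(k)=1 \text{ for all } j\in J_1\}$; this is $K\cap L^{J_0\cup\{i_0\}}$, and it surjects onto $L$ under $\pi_{i_0}$ (because any $g\in L$ is $\pi_{i_0}(k)$ for some $k\in K$, and multiplying $k$ by a suitable element of $N\leq L^{J_1}$ kills its $J_1$-coordinates without changing the $i_0$-coordinate — here I use that $N$ already accounts for all of $L^{J_1}\cap K$ in the relevant sense, which needs a small verification via the structure of $N$ and a counting/Goursat argument). Then $K$ decomposes as $K = N \times K_0$ once one checks $N\cap K_0 = 1$ (immediate, since elements of $N$ are trivial at $i_0$ and outside $J_1$ while elements of $K_0$ are trivial on $J_1$ — wait, this needs $J_0$ handled) and $N K_0 = K$ (every $k\in K$ agrees with some element of $N$ on $J_1$, and the quotient lands in $K_0$). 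Applying the inductive hypothesis to $K_0\leq L^{J_0\cup\{i_0\}}$, which surjects onto $L$ under $\pi_{i_0}$ but perhaps not under the $J_0$-projections, is the subtle part: one must first argue $J_0=\emptyset$, i.e. that there is no coordinate on which $K$-projection is onto $L$ but which is "absorbed diagonally" into $i_0$ — this follows because $K_0$ surjecting onto $L$ via $\pi_{i_0}$ together with $K$ surjecting onto $L$ via every $\pi_j$ forces, by Goursat's lemma applied to $K_0\leq L\times L^{J_0}$, that $K_0$ is a graph of isomorphisms, but then $K_0\cap L^{J_0}$ (which is a subgroup of $N$-type) must be trivial and each such $j\in J_0$ is glued to $i_0$; collecting all of these into one block $O_1 \ni i_0$ gives $K_0\cong L$.

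The main obstacle I anticipate is precisely this bookkeeping of which coordinates are "independent" versus "diagonally identified" — i.e. correctly invoking \cite[(1.4)]{AS} (Goursat-type analysis for subdirect products of simple groups) to conclude that a subgroup of $L^I$ surjecting onto each factor is a direct product of full diagonal subgroups over a partition. Once the partition $\{O_1,\ldots,O_r\}$ of $I$ is produced — the block containing $i_0$ coming from $K_0$, the remaining blocks from the inductive decomposition of $N$ — the claim $K = \prod_j (K\cap L^{O_j})$ with each factor $\cong L$ follows by reassembling $K = N\times K_0$ and matching $N\cap L^{O_j} = K\cap L^{O_j}$ for $O_j\subseteq J_1$ and $K_0 = K\cap L^{O_1}$. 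Alternatively, and more cleanly, one can quote \cite[(1.4)]{AS} essentially verbatim: that result already states the subdirect-product structure of such $K$, and the lemma here is just the specialization to the case where $K$ projects \emph{onto} (rather than merely subdirectly into) $L$ in each coordinate, so the only real content is spelling out that full-subdirect forces the diagonal blocks to be \emph{full} diagonals $L\hookrightarrow L^{O_j}$ rather than twisted-by-automorphism subdirect copies, which is automatic since a diagonal subgroup of $L^{O_j}$ that projects onto each factor is isomorphic to $L$.
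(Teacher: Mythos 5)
The paper gives no proof of this lemma at all: it is quoted directly as a consequence of \cite[(1.4)]{AS}, exactly as you suggest in your closing paragraph. Your self-contained induction on $\card{I}$ is therefore a genuinely different (and more elementary) route, and its skeleton is sound: set $N=\ker(\pi_{i_0})$, use normality of $N$ in $K$ plus simplicity of $L$ to split $J=I\setminus\{i_0\}$ into $J_1$ (where $N$ projects onto $L$) and $J_0$ (where it projects trivially), apply induction to $N\leq L^{J_1}$, and show $K=N\times K_0$ with $K_0=K\cap L^{J_0\cup\{i_0\}}$. Two spots need tightening. First, to get $NK_0=K$ you need $\pi_{J_1}(N)=\pi_{J_1}(K)$ as subgroups of the whole product $L^{J_1}$, not merely coordinatewise; this does hold, but the clean way to see it is to apply the inductive hypothesis also to $M:=\pi_{J_1}(K)$, note that $N\lhd M$ and that a normal subgroup of a product of simple groups is a sub-product of the factors, and conclude $N=M$ since $N$ hits every coordinate of $J_1$ — your ``small verification via a counting/Goursat argument'' is exactly this and should be spelled out. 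Second, your interim claim that one must argue $J_0=\emptyset$ is false (take $K$ the diagonal in $L\times L$ and $i_0=1$: then $J_0=\{2\}$); the correct statement, which you in fact arrive at, is that $\pi_{i_0}$ restricted to $K_0$ is injective (its kernel is $N\cap K_0=1$) and surjective, so $K_0\cong L$ and the whole of $J_0\cup\{i_0\}$ forms a single diagonal block $O_1$. With those two points made precise, reassembling $K=\bigl(K\cap L^{O_1}\bigr)\times\prod_j\bigl(K\cap L^{O_j}\bigr)$ is immediate, and your argument is a complete replacement for the citation.
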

\begin{proof}[Proof of Lemma \ref{carmel}]
\DN{We first show that the projection of $K\leq U^J$ to the $j$-th component contains  $\soc(U)=L^I$, $j\in J$. 
 First $G$ acts transitively on $J$, so that the assumption of Remark \ref{rem:projections}.a) holds. 
Since the projection $\pi_j:G_0\ra U$ from the $j$-th block stabilizer $G_0$ to the $j$-th copy of $U$ is onto, and since $K\lhd G_0$, we have $\pi_j(K)\lhd U$, $j\in J$.  As in addition $U$ has a unique (nonabelian) minimal normal subgroup, and $K\neq 1$, the conditions of Remark \ref{rem:projections}.(b) also hold. Thus, the remark implies that $\soc(U)=L^I$ is a minimal normal subgroup of $\pi_j(K)\supseteq L^I$, $j\in J$. In particular,  $K_0:=K\cap \soc(U)^J=K\cap L^{I\times J}$ surjects onto each copy of $L$, and hence
$K_0=(K\cap L^{O_1})\times \cdots \times (K\cap L^{O_r})$
with $K\cap  L^{O_k}\cong L$, $k=1,\ldots,r$ for some partition $O_1,\ldots,O_r$ of $I\times J$, by  Lemma \ref{lem:gas}.

Secondly, we show  $G\leq U\wr_J V$ acts transitively on $I\times J$ via conjugation of the $I\times J$-indexed copies of $L$ in $\soc(U)^J$. 
Since $\soc(U)$ is a normal subgroup of the primitive group $U$, it acts transitively on $I$ \cite[Theorem 1.6A]{DM}. As $K_0$ projects onto $\soc(U)$, this implies that $G$ acts transitively on each block $I\times \{j\}$, $j\in J$. 
Furthermore, $G$ acts transitively on the blocks $J$, yielding its transitivity on $I\times J$.
It follows that $G$ acts transitively on the copies of $L$ in $K_0$, and hence that $G$ acts transitively on the partition $O_1,\ldots,O_r$ of $I\times J$. 

It remains to note that $\soc(K)$ in fact equals $K_0$: Since $K_0=K\cap \soc(U)^J$ is normal in $K$ and is a \JKo{direct product of isomorphic nonabelian simple groups which are permuted transitively by $G$,}  $K_0$ is contained in $\soc(K)$. To show equality, it suffices to show that a normal subgroup $C\lhd K$ which is disjoint from $K_0$ is trivial. Indeed, such $C$ centralizes $K_0$, and hence $\pi_j(C)$ centralizers $\soc(U)=L^I\leq \pi_j(K_0)$, $j\in J$. Since the centralizer of $\soc(U)$ in $U$ is trivial, $\pi_j(C)=1$, $j\in J$ and hence $C=1$, as needed.
}
\end{proof}
In particular, in the setting of Lemma \ref{carmel} one has:
\begin{cor}
\label{carmel2}
The socle $\soc(K)$ is a minimal normal subgroup of $G$. 
\end{cor}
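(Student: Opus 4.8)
The plan is to leverage the structural description of $\soc(K)$ obtained in Lemma \ref{carmel} together with the minimality property already built into case (C) of the socle dichotomy. Recall that by Lemma \ref{carmel} we have
\[
\soc(K) = K\cap \soc(U)^J \cong (K\cap L^{O_1})\times\cdots\times(K\cap L^{O_r}),
\]
with each $K\cap L^{O_j}\cong L$ nonabelian simple, and the partition $O_1,\dots,O_r$ of $I\times J$ is $G$-invariant, i.e. $G$ permutes the factors $L^{O_j}$ by conjugation. So the first step is to observe that $\soc(K)$ is in any case normal in $G$: it is a characteristic subgroup of $K = G\cap U^J$, which is itself normal in $G$, hence $\soc(K)\lhd G$.

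The second step is to show transitivity of the $G$-action on the set of simple direct factors $\{K\cap L^{O_1},\dots,K\cap L^{O_r}\}$. Since $G$ permutes the blocks $O_1,\dots,O_r$ of the partition of $I\times J$, it suffices to see that this permutation action is transitive. But $V$ acts transitively on $J$ and $\soc(U)$ acts transitively on $I$ (being normal in the primitive group $U$, cf. \cite[Theorem 1.6A]{DM}), so $G$ acts transitively on $I\times J$ — this is exactly what was proven in part 2) of the proof of Lemma \ref{carmel}. A transitive $G$-action on $I\times J$ forces a transitive $G$-action on any $G$-invariant partition of $I\times J$; hence $G$ permutes the $r$ simple factors transitively.

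The third step concludes: a normal subgroup that is a direct product of finitely many isomorphic nonabelian simple groups permuted transitively under conjugation by the ambient group is a minimal normal subgroup. Indeed, if $1\neq N\lhd G$ with $N\leq \soc(K)$, then $N$ is normalized by $\soc(K)$, and since $\soc(K)$ is a direct product of nonabelian simple factors $S_j := K\cap L^{O_j}$ with trivial centralizers of one another, $N$ must be the product of those $S_j$ it meets nontrivially (here one uses that $N\cap S_j \lhd S_j$ is $1$ or all of $S_j$, since $S_j$ is simple, together with the fact that $[N,S_j]\leq N\cap S_j$). The set of indices $j$ with $S_j\leq N$ is then $G$-invariant and nonempty, so by transitivity of $G$ on the factors it is all of $\{1,\dots,r\}$, giving $N = \soc(K)$. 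Therefore $\soc(K)$ is minimal normal in $G$.

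I do not expect a genuine obstacle here: all the real content — the identification of $\soc(K)$ and the transitivity of $G$ on $I\times J$ — is already packaged in Lemma \ref{carmel}, and what remains is the standard fact that a transitively-permuted direct power of a nonabelian simple group is minimal normal. The only point requiring a line of care is deducing transitivity on the partition blocks from transitivity on $I\times J$, and the standard simple-group argument that $N$ is a sub-product of the $S_j$; both are routine.
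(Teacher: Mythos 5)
Your proof is correct and follows essentially the same route as the paper's: both decompose a nontrivial normal subgroup $N\leq\soc(K)$ as a sub-product of the simple factors $K\cap L^{O_j}$ and then invoke the transitivity of $G$ on $I\times J$ (established in the proof of Lemma \ref{carmel}) to force $N=\soc(K)$. The only cosmetic difference is that the paper obtains the sub-product decomposition by citing Lemma \ref{lem:gas}, whereas you give the standard direct commutator/centralizer argument.
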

\begin{proof} \DN{Let $N\lhd G$ be a normal subgroup}. As in Lemma \ref{lem:gas}, decompose $\soc(K)$ as $\prod_{i=1}^r \soc(K) \cap L^{O_i}$ 
where $O_1,\ldots,O_r$ is a partition of $I\times J$, and $ \soc(K) \cap L^{O_i}\cong L$. 
\DN{Since $N\cap \soc(K)$ is normal in $\soc(K)$,} it decomposes as $\prod_{i\in R}N\cap L^{O_i}$, where $R$ is a subset of $\{1,\ldots,r\}$.
Since $G$ acts transitively on $I\times J$ by Lemma \ref{carmel}, 
 the normality of $N$ in $G$  implies that $R=\{1,\ldots,r\}$ or $\emptyset$, and hence $N\cap \soc(K) = \soc(K)$ or $1$. 
\end{proof}

\subsection{Normal subgroups and decompositions} The following lemma relates normal subgroups of an imprimitive group $G\le U\wr V$ to other partitions of its action. 
\begin{lem}
\label{lem:descent}
Let $G\le U\wr_J V$ be transitive, where $U$ is primitive with \DN{a unique nonabelian minimal normal subgroup}\footnote{In fact $U$ has a unique minimal normal subgroup by Aschbacher--O'Nan--Scott \cite[\S 9]{Gur}.}, and $G$ surjects onto $V$.
Let $G_1\le G$ be a point stabilizer, and $G_1\leq G_0\leq G$  a block stabilizer.
Let $K:=\bigcap_{g\in G}G_0^g$ be the block kernel, and assume $K\neq 1$. 

Then every minimal normal subgroup $N$ of $G$ which is disjoint from $K$ gives rise to 
 a proper subgroup  $G_1 N$ of $G_0N$, with neither of $G_1N$ and $G_0$ containing the other. 
\end{lem}
$$\xymatrix{
G_0N\leq G \ar@{-}[r] \ar@{-}[d] & G_1N \ar@{-}[d] \\
G_0 \ar@{-}[r] & G_1
}
$$
\begin{proof}
To show that $G_1N\ne G_0N$, it suffices to show that $N':= N \cap G_0$ acts trivially on  $G_0/G_1$, since then $(N\cap G_0)G_1 = N'G_1 \ne G_0$, and hence  $G_0\not\leq G_1 N$.

Let $K_0:=\soc(K)$, and let $M$ be the kernel of the action of $K_0\times N'$  on  $G_0/G_1$, so that $(K_0\times N')/M$ embeds into $U$ as a (transitive) normal subgroup. It remains to show that $M$ contains $N'$. However, since  $U$ is nonaffine and $K$ and hence $K_0$ are nontrivial,  $\soc(U)$ and hence also $K_0$ are nontrivial powers of a nonabelian simple group. 
Thus, Goursat's lemma  \cite[Corollary 1.4]{LL} 
implies that a normal subgroup $M$ of $K_0\times N'$ decomposes as $M=(M\cap K_0) \times (M\cap N')$. In particular, the image $K_0/(M \cap K_0) \times N'/(M\cap N')$ of the action is a normal subgroup of $U$.
Since $K_0\neq 1$, it acts nontrivially on each block, and hence  $K_0/(M\cap K_0)$ is nontrivial. As  $U$ has a unique minimal normal subgroup and $K_0\lhd G$, this shows that $K_0/(M\cap K_0)$ contains $\soc(U)$ \DN{as in Remark \ref{rem:projections}}. Moreover, since $U$ has a unique minimal normal subgroup, this forces $N'/(M\cap N')=1$, as desired. 

It remains to note that $G_1N$ is not contained in $G_0$, since by assumption $$1=N\cap K = N\cap \bigcap_{g\in G} G_0^g = \bigcap_{g\in G}(N\cap G_0)^g$$ while $K\neq 1$, giving $N\not\subseteq G_0$. 
\end{proof}

Note that the conclusion of Lemma \ref{lem:descent} yields a refinement $G> G_0N >G_1N>G_1$ of the  inclusion $G>G_1$ which is essentially different from $G>G_0>G_1$ (since neither of $G_1N$ and any conjugate of $G_0$ contain the other).

If $G$ is assumed to be the monodromy group of a polynomial map $f:\mP^1_k\ra \mP^1_k$, then the conclusion gives two essentially different decompositions of $f$, yielding:
\begin{cor}
\label{kor:socGsocK}
Let $k$ be a field of characteristic $0$, and $f_i\in k[x]$, $i=1,\ldots,r$ be indecomposable polynomials with nonsolvable monodromy. Let $A$ be the arithmetic monodromy group of $f=f_1\circ \dots \circ f_r$, and $K$ the kernel of the natural projection $A\to \Mon_k(f_1\circ \cdots \circ f_{r-1})$. Then $\soc(A) = \soc(K)$.
\end{cor}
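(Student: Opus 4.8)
The plan is to realize $f = f_1 \circ \dots \circ f_r$ as a tower and apply Lemma \ref{lem:descent} (with its corollary Corollary \ref{kor:socGsocK}'s hypotheses) to the imprimitive structure $A = \Mon(f) \le U \wr V$, where $V = \Mon(f_1 \circ \dots \circ f_{r-1})$ is the monodromy group of the ``bottom'' of the tower and $U = \Mon(f_r)$ is the monodromy of the top map $f_r$. By Remark \ref{rem:pol-kernel}, applied to $f$ written as $(f_1 \circ \dots \circ f_{r-1}) \circ f_r$ — i.e.\ with $u = f_1 \circ \dots \circ f_{r-1}$ in the role of the ``outer'' polynomial — the kernel $K$ of the natural projection $A \to V$ is nontrivial, and in particular $K \ne 1$ so the hypotheses of Lemma \ref{carmel} and Lemma \ref{lem:descent} that involve $K$ are in force. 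First I would record that $f_r$ is indecomposable with nonsolvable monodromy, so $U = \Mon(f_r)$ is primitive and nonsolvable, hence of affine, type (B), or type (C); since $U$ is a polynomial monodromy group and polynomial monodromy groups of nonsolvable type are not affine or type (B) (the point stabilizer $U_1$ fixes $\infty$, and an affine or type-(B) primitive group has no such structure compatible with a single totally ramified point — here one invokes the classification behind Theorem \ref{thm:pol-cls}, which lists exactly the nonsolvable possibilities), $U$ must be of type (C) with $\soc(U) \cong L^I$ for a nonabelian simple $L$. This is exactly the setting of Lemma \ref{carmel} and Lemma \ref{lem:descent}.

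Now the key inclusion $\soc(K) \subseteq \soc(A)$: by Corollary \ref{carmel2}, $\soc(K)$ is a minimal normal subgroup of $A$ (Corollary \ref{carmel2} is stated for $G$ in the role played here by $A$), hence it is contained in $\soc(A)$ by definition of the socle as the product of all minimal normal subgroups. For the reverse inclusion $\soc(A) \subseteq \soc(K)$, suppose toward a contradiction that $\soc(A) \not\subseteq \soc(K)$; equivalently, $A$ has some minimal normal subgroup $N$ with $N \not\subseteq \soc(K)$. Since $N$ and $\soc(K)$ are distinct minimal normal subgroups (if $N \subseteq \soc(K)$ we'd be done, and minimal normal subgroups are either equal or intersect trivially), $N \cap \soc(K) = 1$. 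A fortiori $N \cap K = 1$, because if $N \cap K \ne 1$ then $N \cap K$ is a nontrivial normal subgroup of $A$ contained in $N$, hence equals $N$ by minimality, so $N \subseteq K$, and then $N$ would be a nontrivial normal subgroup of $A$ contained in $K$; intersecting with $\soc(K)$ via Lemma \ref{lem:gas} / Corollary \ref{carmel2} forces $N \supseteq$ one of the simple factors of $\soc(K)$, so $N \cap \soc(K) \ne 1$, a contradiction. So $N$ is a minimal normal subgroup of $G := A$ with $N \cap K = 1$, and Lemma \ref{lem:descent} applies: it produces a proper inclusion $G_1 N < G_0 N$ with neither of $G_1 N$, $G_0$ containing the other, i.e.\ a refinement $G > G_0 N > G_1 N > G_1$ of the chain $G > G_1$ that is ``essentially different'' from the block chain $G > G_0 > G_1$.

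Translating back through the covering–group dictionary: the chain $G > G_0 > G_1$ corresponds to the decomposition $f = (f_1 \circ \dots \circ f_{r-1}) \circ f_r$, while the new chain $G > G_0 N > G_1 N > G_1$ corresponds to a decomposition of $f$ into polynomials that is not obtained from the original by composition with linear polynomials (this is where one uses that polynomial decompositions correspond to chains of subgroups between $G_1$ and $G$, together with the ``essentially different'' clause of Lemma \ref{lem:descent}, which prevents $G_0 N$ from being conjugate to $G_0$ or the new chain from being a relabeling of the old one). But by Remark \ref{rem:Ritt}, since each $f_i$ is indecomposable with nonsolvable geometric monodromy, the decomposition of $f$ into indecomposables is \emph{unique up to composition with linear polynomials}, even over $k$. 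This contradicts the existence of the essentially different decomposition — more precisely, the refinement $G > G_0 N > G_1 N > G_1$ has length at least $3$ while any decomposition obtained from $f_1 \circ \dots \circ f_r$ by inserting linears preserves the ``shape'' of the subgroup chain, so the extra intermediate subgroup $G_1 N$ strictly between $G_1$ and $G_0$ (together with $G_1 N \not\le G_0$) is genuinely new. Hence no such $N$ exists, $\soc(A) \subseteq \soc(K)$, and combined with the first inclusion, $\soc(A) = \soc(K)$.

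**Main obstacle.** The delicate point is making the ``essentially different decomposition contradicts Ritt uniqueness'' step airtight: one must carefully unwind that $G_1 N \not\le G_0$ and $G_0 \not\le G_1 N$ genuinely obstruct the new chain from being a reindexing of $G > G_0 > G_1$, and that Remark \ref{rem:Ritt} rules this out — this requires being precise about how subgroup chains between $G_1$ and $G$ correspond to (possibly coarser or finer) decompositions of $f$, and that inserting $N$ cannot yield the same Ritt decomposition up to linears. A cleaner alternative, which I would pursue first, is to bypass the geometric translation entirely and argue purely group-theoretically: since $\soc(K)$ is already shown to be a minimal normal subgroup of $A$, it suffices to show it is the \emph{only} one, i.e.\ that $A$ has a unique minimal normal subgroup; and this should follow from Lemma \ref{lem:descent} by the same contradiction (a second minimal normal subgroup $N$ disjoint from $K$ would give the forbidden refinement), with ``forbidden'' now meaning it contradicts the indecomposability/uniqueness forced by Remark \ref{rem:Ritt} applied just as above — so the obstacle is the same, but isolating it to the single statement ``$A$ has a unique minimal normal subgroup'' makes the logical structure more transparent.
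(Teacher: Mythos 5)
Your proof takes essentially the same route as the paper, which derives the corollary from precisely the combination you describe: Corollary \ref{carmel2} gives $\soc(K)\le \soc(A)$, and a hypothetical minimal normal subgroup $N\lhd A$ with $N\cap K=1$ is fed into Lemma \ref{lem:descent} to produce a second, essentially different decomposition of $f$ into indecomposables, contradicting Ritt's uniqueness (Remark \ref{rem:Ritt}). The one place you are thinner than the argument requires is the claim that a nontrivial $N\lhd A$ contained in $K$ must meet $\soc(K)$: Corollary \ref{carmel2} only yields the dichotomy $N\cap\soc(K)\in\{1,\soc(K)\}$, and excluding the trivial case needs the additional (standard) observation that each coordinate projection of such an $N$ is a nontrivial normal subgroup of the almost simple group $U=\Mon(f_r)$, hence contains $\soc(U)$, so that $N$ cannot intersect $\soc(U)^J$ (and therefore $\soc(U)^J\cap K=\soc(K)$) trivially.
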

\begin{rem}\label{rem:min-normal}
Furthermore, we show that $\soc(K)$ is the unique minimal normal subgroup of 
every transitive subgroup \DN{$B\leq A$} containing $\soc(K)$. 
\end{rem}
\begin{proof}[Proof of Corollary \ref{kor:socGsocK} and Remark \ref{rem:min-normal}]
First note that $\soc(K)$ is a minimal normal subgroup of $A$ by Lemma \ref{carmel2}. Ritt's theorem \ref{thm:Ritt} implies that the decomposition of $f$ into indecomposables is unique up to composition with linear polynomials, so that $\soc(K)$ is the unique minimal normal subgroup of $A$ by Lemma \ref{lem:descent}. 
In particular,  $\soc(K)$ has a trivial centralizer in $A$, 
forcing $\soc(B) = \soc(K)$. Finally, since $B$ is transitive, we also deduce that $\soc(K)$ is a minimal normal subgroup of $B$ from Corollary \ref{carmel2} (applied with $U$ replaced by a subgroup of it, namely, the image of the action of a block stabilizer of $B$ on a block). 
\end{proof}

\subsection{Pulling back along covers with \DN{affine monodromy}}
The following lemma is the base of our induction process in Proposition \ref{prop:pols-main} below. 
\begin{lem}\label{lem:affine_sub_transitive}
Let $U\leq G$ be a subgroup such that the action on $G/U$ is affine. Suppose  $G=H_0>H_1>\dots>H_r=:H$ is a chain of maximal subgroups such that 1) the action  $\Gamma_i\leq \Sym(H_{i-1}/H_{i})$ of $H_{i-1}$ on $H_{i-1}/H_{i}$ is almost simple with primitive socle for $i=1,\ldots,r$, and 2) the block kernels $\bigcap_{g\in G} H_i^g$, $i=1,\dots, r$ are pairwise distinct.
Then $H_{i-1}\cap U$ is transitive on $H_{i-1}/H_i$, $i=1,\ldots,r$ and 1) and 2) hold with  $H_i$ replaced by $H_i\cap U$ for $i=0,\dots, r$.
\end{lem}
In terms of coverings this gives the following corollary. As we shall see in Section \ref{sec:poly}, its conditions hold when the coverings are polynomial. 

\begin{cor}
\label{cor:poly_mon_nonaffine}
Let $f_i:X_i\ra X_{i-1}$, $i=1,\ldots,r$ be coverings such that 1) $\Mon(f_i)$ are nonabelian almost simple with primitive socle, and 2) $g_i=f_1\circ\cdots f_i$ does not factor through the Galois closure \DN{$\tilde g_{i-1}$} of $g_{i-1}$ for  $i=1,\ldots,r$.  
Let $h$ be a subcover of $\tilde{g}_r:\tilde{X}\to \mathbb{P}^1_k$ \DN{that has an affine monodromy group}.  Let $g_i'$ be the pullback of $g_i$ along $h$, and define $f_i'$ iteratively via $g_i' = g_{i-1}'\circ f_i'$, $i=1,\ldots, r$. Then $g_i'$ is irreducible, 
 and 1) and 2) above hold with $g_i$ replaced by $g_i'$ and  $f_i$ by $f_i'$.
\end{cor}
\begin{proof}
Pick $G=:H_0>H_1>\cdots >H_r= H$ (resp.\ $U$) so that $f_i$ (resp.\ $h$) is equivalent to the  projection $\tilde X/H_{i-1}\ra \tilde X/H_{i}$, $i=1,\ldots,r$ (resp.\ $\tilde X/U\ra X_0$).  
As the conditions of Lemma \ref{lem:affine_sub_transitive} hold, the lemma implies that  $H_i\cap U$ is transitive on $H_{i-1}/H_i$ so that the pullback  $g_i':\tilde X/(H_i\cap U)\ra\tilde X/U$ of $g_i$ along $h$ is irreducible of the same degree as $g_i$, $i=1,\ldots,r$. 
The second assertion of the corollary now follows directly from the second assertion of Lemma \ref{lem:affine_sub_transitive}.
\end{proof}
 The proof of Lemma \ref{lem:affine_sub_transitive} relies on the following observation. 
\begin{lem}\label{lem:composition-factors}
In the setup of Lemma~\ref{lem:affine_sub_transitive}, 
for every $N\triangleleft G$, 
the group $U\cap N$ must contain all nonabelian composition factors of $N$.\end{lem} 
\begin{proof}
First note that  $U$ contains every nonabelian composition factor of $G$, including multiplicities. 
Indeed, 
since $U$ has an elementary abelian complement, it contains every nonabelian composition factor of $G$. 
Applying this to the quotient  $G/N$ (resp.~$G$) shows that its nonabelian composition factors are the same as those of $U/(N\cap U)$ (resp.~$U$).
 Since the composition factors of $U$ are those of $N\cap U$ combined with those of $U/(N\cap U) \cong UN/N \le G/N$, this implies that the nonabelian composition factors of $N\cap U$ and those of $N$ are the same, as desired.
\end{proof}

\begin{proof}[Proof of Lemma \ref{lem:affine_sub_transitive}]
We first show transitivity by  induction on \DN{$r$}. For the induction base \DN{$r=0$}, the assertion holds trivially. Set $K_i:=\bigcap_{g\in G}H_i^g$, and note that $K_i\neq 1$, $i=1,\ldots,r$.
By induction the action of $UK_{r-1}/K_{r-1}$ on a block $G/H_{r-1}$ is transitive. 
It therefore remains to show that  $U\cap H_{r-1}$ is transitive in its action on a given block $H_{r-1}/H_r$. 
Since $\soc(K_{r-1})\triangleleft G$,  Lemma \ref{lem:composition-factors} shows that $U$ must contain every nonabelian composition factor of $\soc(K_{r-1})$. 
Let $\Gamma$ denote the image of the action $\psi:H_{r-1}\ra\Sym(H_{r-1}/H_r)$. 
Since $\Gamma$ is nonabelian almost simple and $K\ne 1$, Remark \ref{rem:projections} implies that the projection   $\psi(\soc(K_{r-1}))$ to any block is a nontrivial normal subgroup of $\Gamma$. Since $\Gamma$ is primitive, $\psi(\soc(K_{r-1}))$ and 
hence $U\cap H_{r-1}$ is transitive on $H_{r-1}/H_r$, completing the induction.

To show that 1) and 2) hold,  identify the image $\Gamma_i'\leq \Sym((H_{i-1}\cap U)/(H_{i}\cap U))$ of the action of $H_{i-1}\cap U$ with a subgroup of $\Gamma_i$. Then $\Gamma_i'$ is almost simple with the same socle $\soc(\Gamma_i)$, so that 1) holds. 
 Since $\soc(K_i)$ is a direct product of nonabelian simple groups by Lemma \ref{carmel} and its composition factors are contained in $U$ as above, $\soc(K_i)$ is contained in the kernel $\bigcap_{u\in U} (H_{i}\cap U)^u$ of the action of $U$ on cosets of $H_{i}\cap U$, $i=1,\ldots,r$, so that these kernels are pairwise disjoint. 
\end{proof}
\begin{rem}\label{rem:ker-socle}
In the setup of Corollary \ref{cor:poly_mon_nonaffine}, 
we may identify $\Mon_k(g_i')$ as a subgroup of $\Mon_k(g_i)$ as in Remark \ref{rem:pullback}. 
Then the kernel of  $\Mon_k(g_i')\ra \Mon_k(g_{i-1}')$ contains the socle of the kernel of $\Mon_k(g_i)\ra \Mon_k(g_{i-1})$. 
Indeed, as the groups $U$ and $H_i$, $i=1,\ldots,r$  in \DN{the proof of the corollary satisfy all of the hypotheses of Lemma \ref{lem:affine_sub_transitive}, the final step of the proof of the lemma yields the desired conclusion.}
\end{rem}

\section{The main tool}

The following proposition establishes a machinery to compare low genus subcovers of the Galois closure $\tilde f:\tilde X\ra\mP^1_k$ of polynomial coverings $f:\mP^1_k\ra\mP^1_k$, with the composition factors of $f$ itself. 
In this section, we fix a base field $k$ of characteristic $0$. All occurring coverings are to be understood as coverings over $k$. Consequently, the term ``monodromy group" always refers to the arithmetic monodromy group.


\begin{prop}\label{prop:pols-main}
Let $f = f_1\circ \cdots \circ f_r$ for indecomposable polynomials $f_i\in k[x]$ with nonsolvable monodromy group. 
Let $f_V=f_U\circ h'$ be a subcover of the Galois closure of $f$ such that $f_U$ is a composition of coverings with affine monodromy groups while $h'$ is indecomposable whose monodromy group  $\Gamma$ is nonsolvable with a solvable quotient $\Gamma/\soc(\Gamma)$. 
Then there exists a subcover $h$ of $f_V$ with the same Galois closure as $f_1$.  
\end{prop}
\begin{proof}
As usual denote by $f_C:\tilde X/C\ra\mP^1_{k}$ the subcover corresponding to $C\leq \Mon_k(f)$.
First note that the (nonsolvable) monodromy groups of $f_1,\ldots,f_r$ have primitive nonabelian simple socles by Theorem \ref{Burnside}. Moreover,  $g_{i+1}:=f_1\circ \cdots \circ f_{i+1}$ is not a subcover of the Galois closure of $g_i$ by Lemma \ref{lem:pol-kernel}, or equivalently the kernel $K_{i}$ of the projection $\Mon_k(g_i)\ra \Mon_k(g_{i-1})$ is nontrivial, for $i=1,\ldots,r-1$. 

We first show that the pullback $f'$ of $f$ along $f_U$ has similar properties to the above properties of $f$. 
First note that in view of our assumptions on $f_U$ and $f_i$, $i=1,\ldots,r$,  $U$ is transitive by Lemma \ref{lem:affine_sub_transitive}. Thus, letting $G_1\leq \Mon_k(f)$ be a point stabilizer,  $f'$ is equivalent to the projection $\tilde X/(U\cap G_1)\ra \tilde X/U$ by  Remark \ref{rem:fiber}, and $f'$ has the same Galois closure $\tilde X$ as $f$ by Lemma \ref{lem:pull-closure}.    
\DN{As the socles of $\Gamma_i:=\Mon_k(f_i)$ are primitive and nonabelian simple, Corollary \ref{cor:poly_mon_nonaffine} implies that} $f'$ decomposes as $f_1'\circ\cdots \circ f_r'$ for indecomposable $f_i'$, $i=1,\ldots,r$ with almost simple monodromy groups. 
Since $g_i':=f_1'\circ \cdots\circ f_i'$ is the pullback of $g_i$ along $f_U$, we shall henceforth identify $\Mon_k(g_i')$ as a subgroup of $\Mon_k(g_i)$ for all $i$, as in Remark \ref{rem:pullback}. Moreover, Remark \ref{rem:ker-socle} then shows that 
the kernel $K_i'$ of the projection $\Mon_k(g_i')\ra\Mon_k(g_{i-1}')$ contains $\soc(K_i)$ and in particular is nontrivial.

Since $f'$ has Galois closure $\tilde f':\tilde X\ra \tilde X/U$ as shown above, we may regard $h'$ as a subcover of  the Galois closure of $f'$. 
Suppose $1\leq s\leq r$ is minimal for which $h'$ is a subcover of the Galois closure of $g_s'$. We claim that $s=1$. 

Since $\Mon_k(g_i')$ is a transitive subgroup of $\Mon_k(g_i)$ and since $\soc(K_i)\leq \Mon_k(g_i')$ as above, Corollary \ref{kor:socGsocK} and Remark \ref{rem:min-normal} imply that $\soc(K_i)$ is the unique minimal normal subgroup of $\Mon_k(g_i')$. 
Letting $\phi:A\ra \Gamma$ be the natural projection, it follows that either $\soc(K_s)\subseteq \ker\phi$ or $\ker\phi=1$. 
In the former case, since $K_s/\soc(K_s)\leq (\Gamma_s/\soc(\Gamma_s))^{\deg g_{s-1}}$ by Lemma \ref{carmel}, $K_s/\soc(K_s)$ is solvable by \DN{Schreier}'s conjecture, and hence so is  $K'_s/\soc(K_s)$ and $\phi(K'_s)$. 
 Since $\Gamma$ is primitive (as $h'$ is indecomposable) and $\soc(\Gamma)$ is nonabelian, $\Gamma$ has no nontrivial solvable normal subgroups by Aschbacher--O'Nan--Scott \cite[\S 11]{Gur}. Thus $\phi(K'_s)=1$ and $h'$ is a subcover of the Galois closure of $g'_{s-1}$, contradicting the minimality of $s$.

Henceforth, we may assume $\ker\phi=1$, that is, the Galois closure of $h'$ is the same as that of $g_s'$. 
Since  $\Gamma/\soc(\Gamma)$ is solvable, and 
$\Mon_k(g_{s-1}')$ is a nonsolvable quotient of \DN{$\Mon_k(g_s')/\soc(K_s)$} for $s>1$, the claim follows. Note that as $\Mon(f_1')$ is almost simple, the Galois closure of $h'$ in fact coincides with that of $f_1'$.

Let $\tilde f_1:\tilde X_1\ra\mP^1$ be the Galois closure of $f_1$ and $\phi_1:\Mon(f)\ra \Mon(f_1)$ the natural projection. The subcover $h:\tilde X_1/\phi_1(V)\ra \tilde X_1/\Mon(f_1)$ of the Galois closure of $f_1$ is then equivalent to $\tilde X/(\ker(\phi_1)\cdot V)\ra\tilde X/\Mon(f)$, and hence is also a subcover of $f_V:\tilde X/V\ra \tilde X/\Mon(f)$. 
Moreover,  since $\phi_1(V)$ has trivial core 
in $\Mon(f_1')$ and hence in $\Mon(f_1)$, 
the Galois closure of $h$ is the same as that of $f_1$. 
\end{proof}
\DN{Note that the only facts about the polynomials $f_1,\ldots,f_r$ used in the proof are:
 1) $\Mon_k(f_i)$ is almost simple with primitive socle;  2)  $f_1\circ\cdots\circ f_r$ has a unique decomposition up to composition with linear polynomials (Theorem \ref{thm:Ritt}); and 3) $g_{i+1}=f_1\circ\cdots \circ f_{i+1}$ does not factor
 through the Galois closure of $g_i$. Thus, one may replace  $f_1,\ldots,f_r$ in the proposition by arbitrary coverings satisfying 1)-3).}

\section{Proof of Theorems \ref{thm:polsQ} and \ref{thm:DLS-pol}}\label{sec:conclusions}
\subsection{Reductions}\label{sec:reduction}
In this section we apply Proposition \ref{prop:pols-main} to reduce Theorems \ref{thm:polsQ} and \ref{thm:DLS-pol} to assertions regarding merely the first composition factor $f_1$. 
Throughout this section $k$ is finitely generated. For a number field $k$, let $O_k$ be its ring of integers.  
\begin{cor}\label{cor:polsZ}
Suppose $f= f_1\circ \cdots \circ f_r\in k[x]$ for indecomposable polynomials $f_i$, $i=1,\ldots,r$ with nonsolvable monodromy groups over a number field $k$. Then $\Red_f(O_k)$  is contained in the union of a finite set with $\bigcup_h (h(k)\cap O_k)$, where $h$ runs over Siegel functions with the same Galois closure as $f_1$. 
\end{cor}
\begin{proof}
Let $\tilde f:\tilde X\ra\mP^1_{k}$ be the Galois closure of $f$ over $k$, and $A$ its arithmetic monodromy group. 
By Corollary \ref{cor:kronecker}, $\Red_f$ is the union of a finite set with $\bigcup_D f_D(k)$, where $D\leq A$ runs over maximal intransitive subgroups satisfying $D\cdot \Mon_{\oline k}(f) = A$, and $f_D$ is a Siegel function equivalent to the projection $f_D:\tilde X/D\ra\mP^1_k$. 

Since such $D$ is intransitive, Corollary \ref{cor:poly_mon_nonaffine} implies that $f_D$ is not a composition of coverings with affine monodromy. 
Thus, $f_D$ has a minimal subcover $f_{V}:\tilde X/V\ra\mP^1_k$, $D\leq V\leq A$ with decomposition   $f_V  = f_U \circ h'$, $V\leq U\leq A$,  such that $f_U:\tilde X/U\ra\mP^1_k$  is a composition of Siegel functions with affine monodromy and an indecomposable Siegel function $h':\tilde X/V\ra \tilde X/U$ with nonaffine (hence nonsolvable)  monodromy. 

 Proposition \ref{prop:pols-main} then implies that there exists a subcover $h$ of $f_V$ with the same Galois closure as $f_1$. Since $f_D$ is a Siegel function, so is $h$. The claim now follows since clearly $f_D(k)\subseteq h(k)$. 
\end{proof}

\begin{cor}\label{cor:polsQ}
Let $f = f_1\circ \cdots \circ f_r$ for indecomposable  $f_i\in  k[x]$, $i=1,\ldots,r$ 
with nonsolvable monodromy groups, such that the Galois closure of $f_1$ is of genus $>1$. 
Then $\Red_f$ is contained in the union of a finite set and $\bigcup_h h(X(k))$, where $h:X\ra\mP^1_k$ runs over coverings of genus $g_X\leq 1$ with the same Galois closure as $f_1$. 
\end{cor}
\begin{proof}
The proof is similar to that of Corollary \ref{cor:polsZ} but applies Proposition \ref{prop:pols-main} over $\oline k$. 
Letting $\tilde f:\tilde X\ra \mP^1_k$, $\tilde f_{\oline k}:\tilde X_{\oline k} \ra \mP^1_{\oline k}$,  $A$ and $G$, be the Galois closures of $f$ over $k$ and $\oline k$, and arithmetic and geometric mondromy groups of $f$, respectively.   Let $f_D:\tilde X/D\ra\mP^1_k$ be  a genus $\leq 1$ subcover of $\tilde f$ for   maximal intransitive $D\leq A$ satisfying $D\cdot G = A$. 

The  subgroup $C:=D\cap G$  is also intransitive, so that 
the subcover $f_C$ of $\tilde f_{\oline k}$ is not a composition of coverings with affine monodromy. 
Thus, $f_C$ has a minimal subcover $f_{V}:\tilde X_{\oline k}/V\ra\mP^1_{\oline k}$, $C\leq V\leq G$ with decomposition   $f_V  = f_U \circ h'$, $V\leq U\leq G$,  such that $f_U:\tilde X_{\oline k}/U\ra\mP^1_{\oline k}$  is a composition of coverings with affine monodromy, and $h':\tilde X_{\oline k}/V\ra \tilde X_{\oline k}/U$ is an indecomposable covering with nonaffine  (hence nonsolvable) monodromy group. 
Note that $\tilde X_{\oline k}/U$ is of genus $0$, since otherwise $h'$ is a covering between genus $1$ curves, hence with abelian monodromy \cite[Theorem 4.10(c)]{Sil}, contradicting  the assumption that its monodromy is nonaffine. 
Theorem \ref{thm:nonsolv-quot} then  implies that  the proper quotients of $\Mon_{\oline k}(h')$ are solvable.
Thus, the conditions of Proposition \ref{prop:pols-main} hold over  $\oline k$. 

Proposition \ref{prop:pols-main} then implies that there exists a subcover of $f_V$ with the same Galois closure as $f_1$. 
Letting $\pi:A\ra \Gamma_1$ be the projection to $\Gamma_1:=\Mon_k(f_1)$ and noting that $\Gamma_1$ is nonabelian almost simple by Theorem \ref{Burnside}, it follows that $\pi(V)$ and hence $\pi(C)$ does not contain $\soc(\Gamma_1)$. 
We claim that $\pi(D)$ does not contain $\soc(\Gamma_1)$ and hence the natural projection $h:\tilde X/(\ker\pi\cdot D)\ra \mP^1_k$ has the same Galois closure as $f_1$. To see this, assume on the contrary $\pi(D)$ contains $\soc(\Gamma_1)$ and hence $\pi(D)\leq \Gamma_1$ is almost simple with the same socle. 
Note that $C\lhd D$ since $G\lhd A$, and hence $\pi(C)\lhd\pi(D)$.  
 Since $\pi(D)$ is almost simple and $\pi(C)$ does not contain $\soc(\Gamma_1)$, this implies   $\pi(C)=1$. 
 However, the latter 
  contradicts the assumption
  that $f_1$ has Galois closure of genus $>1$, proving the claim.

Since $h$ is a subcover of $f_D$, it follows that $f_D(\tilde X/D(k))\subseteq h(X(k))$, where $X:=\tilde X/(\ker\pi\cdot D)$. Thus Corollary \ref{cor:kronecker} implies that $\Red_f$ is contained in the union of a finite set with $\bigcup_h h(X(k))$, where $h:X\ra\mP^1_k$ runs over coverings of genus $g_X\leq 1$ with the same Galois closure as $f_1$. 
\end{proof}

\begin{cor}\label{cor:DLS-red}
Let $f=f_1\circ\cdots\circ f_r$ be the composition of indecomposable polynomials $f_i\in\mC[x]$ with nonsolvable monodromy group. Assume $f(x)-h(y)\in\mC[x,y]$ is reducible for nonlinear $h\in \mC[y]$. Then $h=h_1\circ g$ for $g,h_1\in \mC[x]$ such that $h_1$ has the same Galois closure as $f_1$. 
\end{cor}
\begin{proof}
The reducibility of $f(x)-h(y)\in \mC[x,y]$ implies the reducibility of the normalization of the curve defined by $f(x)-h(y)=0$.  This curve is (the normalization of) the fiber product of the maps $f:\mP^1_\mC\ra\mP^1_\mC$ and  $h:\mP^1_\mC\ra\mP^1_\mC$. 
By Lemma \ref{Fried}, we may replace $h$ by a common polynomial subcover $h_4$ of $h$ and of the Galois closure $\tilde f$, whose fiber product with $f$ is still reducible. 
It follows that $h_4$ is not a composition of polynomials with affine monodromy groups by Corollary \ref{cor:poly_mon_nonaffine}. 

We may therefore pick a minimal polynomial subcover $h_2\circ h_3$ of $h_4$ which is not a composition of polynomials with affine monodromy groups, so that $h_2$ is a composition of polynomials with affine monodromy groups, and $h_3$ is indecomposable and  $\Gamma:=\Mon_\mC(h_2)$ is nonaffine. As $\Gamma$ is nonaffine, Theorem \ref{Burnside} implies that it is nonabelian almost simple, and in particular $\Gamma/\soc(\Gamma)$ is solvable. We may therefore apply Proposition \ref{prop:pols-main} to deduce that $h_2\circ h_3$ has a polynomial subcover $h_1$ with the same Galois closure as $f_1$. 
 \end{proof}

\subsection{A classification theorem}\label{sec:class} 
The  combination of \cite{Mul} and~\cite[\S1.2]{GS} gives: 
\begin{thm}\label{thm:pol-cls}
Let  $f:\mP^1_{\oline k}\ra\mP^1_{\oline k}$ 
be an indecomposable polynomial covering over $\oline k$ of degree $>20$, and $\tilde f:\tilde X\ra\mP^1$ its Galois closure. For every indecomposable subcover $h:Y\ra\mP^1$ with Galois closure $\tilde f$ and genus  $g_Y\leq 1$, one of the following holds:
\begin{enumerate}
\item $h$ is equivalent to $f$. 
\item $f$ is one of the nine families of polynomials whose ramification is given in Table \ref{table:two-set-stabilizer} with monodromy group $G=A_\ell$ or $S_\ell$; and $h$ is the genus $0$ 
covering $\tilde X/G_2\ra \mP^1$ where $G_2$ is the stabilizer of a set of cardinality $2$. 
\item 
The monodromy group of $f$ is either $\PGammaL_3(4)$ or $\PSL_5(2)$, in their natural action of degree $21$ and $31$, resp. In each case, there is only one possible ramification type for $f$, and exactly one more subcover $h$ of genus $\leq 1$. \footnote{More precisely, $h$ is of genus $0$ and corresponds to the image of the point stabilizer under the graph automorphism. Explicit equations for $f$ and $h$ are given in \cite{CCN99}.}
\end{enumerate}
\end{thm}
\begin{table}
\caption{Ramification types of polynomial maps $\mP^1_{\oline k}\ra\mP^1_{\oline k}$ of degree $\ell> 20$ and monodromy group $A_\ell$ or $S_\ell$
for which the genus of the quotient by a $2$-set stabilizer is $0$.
Here $a\in\{1,\ldots, \ell-1\}$ is odd, $(a,\ell)=1$, and in each type $\ell$ satisfies the necessary congruence conditions to make all exponents integral. } 
\begin{equation*}\label{table:two-set-stabilizer}
\begin{array}{| l |}
\hline
{[\ell], [a,\ell-a], \left[1^{\ell-2}, 2\right]}  \\ 
{[\ell], [1^3,2^{(\ell-3)/2}], [1,2^{(\ell-1)/2}], \left[1^{\ell-2}, 2\right] } \\
{[\ell], [1^2,2^{(\ell-2)/2}] \text{ twice}, \left[1^{\ell-2},2\right]} \\ 
{[\ell], \left[1^3,2^{(\ell-3)/2}\right], [2^{(\ell-3)/2},3] } \\ 
{[\ell], \left[1^2,2^{(\ell-2)/2}\right], [1,2^{(\ell-4)/2},3] } \\ 
{[\ell], \left[1,2^{(\ell-1)/2}\right], [1^2,2^{(\ell-5)/2},3]} \\ 
{[\ell], \left[1^3,2^{(\ell-3)/2}\right], [1,2^{(\ell-5)/2},4] } \\ 
{[\ell], \left[1^2,2^{(\ell-2)/2}\right], [1^2,2^{(\ell-6)/2},4] } \\ 
{[\ell], \left[1,2^{(\ell-1)/2}\right], [1^3,2^{(\ell-7)/2},4]} \\ 
\hline
\end{array}
\end{equation*}
\end{table}

\begin{rem}\label{rem:AG}\label{rem:mon}
1) Note that for polynomials of degree $10\leq \deg f\leq 20$ the corresponding subcovers  $h$   are also listed in \cite{Mul} and \cite[Theorem A.4.1]{GS}. \\
2) In Case (1), either $G=A_n$, $S_n$ or $M_{23}$. 
\DN{Letting  $A=\Mon_k(f)$,  one has $A=G$ in case (3) or if $G=M_{23}$ since in these cases the symmetric normalizer of $G$ is $G$}. \\ 
3) The only groups in Theorem \ref{thm:pol-cls} which appear as the monodromy group of a polynomial  over $\mQ$ are alternating and symmetric. \\
\end{rem}

\subsection{Deducing the theorems}\label{sec:poly}
We first deduce the first assertion of Theorem  \ref{thm:polsQ} from Corollary \ref{cor:polsZ} by applying the classification of Siegel functions:
\begin{proof}[Proof of Theorem \ref{thm:polsQ} for integral values]
Note that since $f_1,\ldots,f_r$ are of degree $\geq 5$ and are not \JKo{linearly related}
 to $x^n$ or Chebyshev,
$\Mon_\mQ(f_i)$, $i=1,\ldots,r$ are nonabelian almost simple by Theorem \ref{Burnside}. Thus,  Corollary \ref{cor:polsZ} implies that  $\Red_{f}(\Z)$ is contained in the union of a finite set with $\bigcup_h (h(\mQ)\cap \Z)$, where $h$ runs over Siegel functions with the same Galois closure as $f_1$.  To show that equality holds, it suffices to show that $h(\mQ)\cap \Z$ is contained in $\Red_{f_1}(\Z)$ for such $h$. 

As $f_1(\mQ)\cap \Z\subseteq \Red_{f_1}(\Z)$, it suffices to consider Siegel functions $h$ arising in a different monodromy action of $\Gamma:=\Mon_\mQ(f_1)$,    
not equivalent to that of $f_1$. 
Since this action may be assumed minimally nonsolvable and $\Gamma$ is almost simple, this means that either $\Gamma$ must induce a Siegel function in a second action {\it permutation-equivalent} to that of $\Mon_\mQ(f_1)$; 
or some subgroup between $\Gamma$ and its socle must induce a Siegel function in a {\it different} primitive action. From the classification of primitive monodromy groups of Siegel functions in \cite{Mul2} (in particular Theorems 4.8 and 4.9), one extracts using a computer check that the first scenario happens only for $\Mon_{\oline k}(f_1)\in \{\PSL_2(11), \PSL_3(2), \PSL_3(3), \PSL_4(2), \PGammaL_3(4), \PSL_5(2)\}$, whereas the second one only happens for $\Mon_{\oline k}(f_1)\in\{A_5, S_5, \PSL_3(2),  \PGammaL_2(9)$, $M_{11}, \PSL_4(2)\}$. Out of those possibilities, only the polynomials with monodromy group $S_5$ and $\PGammaL_2(9)$ can be defined over $\mathbb{Q}$ (Remark \ref{rem:AG}), and for the latter group the Siegel function $h$ does not have two poles of the same order, and so is not a Siegel function over $\mathbb{Q}$, cf.\ e.g.\ \cite[\S 4.4]{Mul2}. 
\end{proof}


Note that by invoking \cite{Mul2} over a general number field $k$ instead, the same proof shows that $\R_f(O_k)$ is the union of $\R_{f_1}(O_k)$ and a finite set if one assumes $k$ is a number field and  merely\footnote{Here $15$ is the degree of the action of $\PSL_4(2)$ from the second list of groups in the above proof. } that $\deg f_1>15$.

The rest of the assertions of Theorem \ref{thm:polsQ} follow from the following theorem which itself follows from Corollary \ref{cor:polsQ} and Theorem \ref{thm:pol-cls}.
\begin{thm}
\label{thm:pols}
Let $k$ be finitely generated, and $f = f_1\circ \cdots \circ f_r$ for indecomposable  $f_i\in k[x]$ of degree $\ge 5$,  none of which is  $\mu\circ x^n\circ\nu$ or $\mu\circ T_n\circ\nu$ for linear $\mu,\nu\in \oline k[x]$. 
  If $\deg(f_1)>20$, then $\R_f$ is the union of $\R_{f_1}$ and a finite set.

In particular,  
either (1) $\R_f$ is the union of $f_1(k)$ and a finite set, or (2) 
there exists (a single)  $f_1'\in k(x)$  such that  $\R_f$ is the union of  $f_1(k)\cup f_1'(k)$ and a finite set.
In the latter case, either the ramification of $f_1$ is as in Table \ref{table:two-set-stabilizer}, or $k\neq \mQ$ and $f_1$ is one of the two cases in Theorem \ref{thm:pol-cls}.(3). 
\end{thm}
\begin{proof}
First note that $\Mon_k(f_i)$, $i=1,\ldots,r$ are nonsolvable by Theorem \ref{Burnside}. Since $\deg f_1>20$ and $\Mon_k(f_1)$ is nonsolvable, the Galois closure of $f_1$ is of genus $>1$, e.g.\ by \cite[Proposition 2.4]{GT}. Thus  Corollary \ref{cor:polsQ} implies that  $\R_{f}$ is contained in the union of a finite set and $\bigcup_h h(X(k))$, where  $h:X\ra\mP^1_k$ runs over genus $\leq 1$ subcovers of the Galois closure of $f_1$.  To show that equality holds, it suffices to show that $h(X(k))$ is contained in $\R_{f_1}$ for all such $h$. Note that  $A:=\Mon_k(f_1)$ is nonabelian almost simple by Theorem \ref{Burnside} since $f_1$ is of degree $\geq 5$ and is not \JKo{linearly related} to $x^n$ or Chebyshev.

By Theorem \ref{thm:pol-cls} and Remark \ref{rem:AG}, the only possible nonsolvable geometric monodromy groups $G=\Mon_{\oline k}(f_1)$ for indecomposable $f_1$ of degree $>20$, which are nonalternating and nonsymmetric,  are $\PGammaL_3(4), M_{23}$ and $\PSL_5(2)$. In these cases $A=G$ and $k\neq \mQ$ by Remark \ref{rem:AG}.(2)-(3). 
Moreover, the Galois closure of the only polynomial covering with monodromy group $M_{23}$ has no other genus $\le 1$ equivalence class of subcovers,  so in this case (1) holds. For $\PGammaL_3(4)$ and $\PSL_5(2)$,  the Galois closures of the corresponding polynomials have only one other equivalence class of subcovers of genus $\le 1$, and its stabilizer is intransitive, whence $h(X(k))\subseteq \R_{f_1}$ and (2) holds.
 Note that these cases do not occur over $\mQ$ as well by 
 Remark~\ref{rem:AG}. 
 
Henceforth, we may assume $A=A_n$ or $S_n$ in their natural action. 
We may assume $h$ is minimal with the same Galois closure as $f_1$, and that it is not equivalent to $f_1$. Let $D\leq A$ be its stabilizer, and set $C:=D\cap G$. 
By \cite[Theorem 5.3]{MN}\footnote{Theorem 5.3 of \cite{MN} is based on the work of Guralnick--Shareshain \cite{GS} but no other results in the classification of monodromy groups are used.}, either $C\geq A_{n-1}$ or $C\gneq A_{n-2}$ and the ramification of $f_1$ is in Table \ref{table:two-set-stabilizer}. As $D\supseteq C$ is maximal for which $A$ acts faithfully on $A/D$, it follows that $D$ is either a stabilizer $A_1:=A\cap S_{n-1}$ in the natural action or a stabilizer $A_2=A\cap (S_{n-2}\times S_2)$ of a set of cardinality $2$, and in the latter case the ramification of $f_1$ is in Table \ref{table:two-set-stabilizer}. The former case $D=A_1$ does not occur since $h$ is not equivalent to $f_1$. In the latter case $D=A_2$, it is intransitive, and hence $h(X(k))\subseteq \R_{f_1}$ and (2) holds. 
\end{proof}

When adding the lists from Remark \ref{rem:AG}.(1) as exceptions to Theorem \ref{thm:pols},  
the same proof would lower the degree assumption on $f_1$ to merely $\deg f_1\geq 10$. In particular over $k=\mQ$,  Theorem \ref{thm:pols} holds for polynomials $f_1$ of degree $\deg f_1> 10$ without adding further exceptions. 

Finally we conclude  Theorem \ref{thm:DLS-pol} from Corollary \ref{cor:DLS-red} and Theorem \ref{thm:pol-cls}:
\begin{proof}[Proof of Theorem \ref{thm:DLS-pol}] 
Since $x^n$, $T_n$, and an indecomposable degree $4$ polynomial do not appear as composition factors of $f$, the monodromy group of each  $f_i$ is nonsolvable with proper solvable quotients by Theorem \ref{Burnside}.
Thus we may apply  Corollary \ref{cor:DLS-red} to obtain a polynomial subcover \DN{$h_1$ of $h$} with the same Galois closure as  of $f_1$.

 Since $\deg f_1>31$, 
  the possibilities for  $h_1$ are described in cases (1)-(2) of Theorem \ref{thm:pol-cls}. 
  In fact, as both $h_1$ and $f_1$ are polynomials with alternating or symmetric monodromy $\Gamma_1$, the point stabilizer of both of them is conjugate to the stabilizer in the natural action of $\Gamma_1$. Hence $h_1$ and $f_1$ are equivalent, as desired. 
  \end{proof}

\appendix

\section{Proof of  Theorem \ref{thm:nonsolv-quot}}\label{app:Shih}
The proof requires the following  addition to Shih's paper \cite{Shih}. More generally, a  classification of indecomposable genus $1$ coverings with more than one minimal normal subgroup is addressed in \cite{Sal}. 
\begin{prop}\label{prop:shih}
Let $f:X\ra\mP^1$ be an indecomposable covering of genus $g_X\leq 1$  whose monodromy group $G$ has more than one minimal normal subgroup. 
Then $G/\soc(G)$ is solvable.  
\end{prop}
\begin{proof}
Since $G$ has more than one minimal normal subgroup, it has two isomorphic nonabelian minimal normal subgroup by the Aschbacher-Scott theorem \cite[Theorem 11.2]{Gur}, so that $\soc(G)\cong L^{2t}$ for a nonabelian simple group $L$ and  $t\geq 1$. 

We claim that the proof in Shih's paper yields that $t=1$ even under the mere assumption $g_X\leq 1$, showing that $\soc(G)\cong L^2$. Since $G/\soc(G)$ embeds into $\Aut(L^2)/\Inn(L)^2\cong (\Aut(L)/\Inn(L))^2\rtimes S_2$ and $\Aut(L)/\Inn(L)$ is solvable by Schreier's conjecture, this shows that $G/\soc(G)$ is solvable, proving the proposition. 

To prove the claim,  
we follow closely the proof of \cite{Shih} and use the notation of \cite[\S 1-2]{Shih}: Let $S=[g_1,\ldots,g_r]$ be a tuple with product $1$ which is  associated to $f$ and generates $G$. Let $\orb(g_1)$ denote the multiset of orbits of $g_1$ and set $n:=\deg f$.  As $g_X\leq 1$, the  Riemann--Hurwitz formula implies that
$$0\geq 2(g_X-1) = -2n + \sum_{i=1}^r(n-\#\orb(x_i)), $$ 
or equivalently $u(S)\geq r-2$, where $u(g_i):=\#\orb(x_i)/n$ and $u(S):=\sum_{i=1}^ru(g_i)$, following the notation of \cite[\S 2]{Shih}. 
Shih's proof uses the assumption $g_X=0$ only in order to make sure the strict inequality $u(S)>r-2$ holds. 
However, we show below that already the inequality $u(S)\geq r-2$ (or  $g_X\leq 1$) suffices for his proof.

As in \cite[(4.8)]{Shih}, one first shows that $r\leq 4$ using \cite[(4.7)]{Shih}. Indeed, by \cite[(4.7).(1)]{Shih}, one has $u(g_i)\leq 3/5$, so that $r-2\leq u(S)\leq 3r/5$. The latter shows that $r\leq 5$ with equality only if $u(g_i)=3/5$ for all $i$. However, \cite[(4.7).(2-3)]{Shih} show that  $u(g_i)\leq \max\{7/20,11/30, 8/15, 11/20\}=11/20<3/5$, so that indeed $r\leq 4$.  

For  $L\neq A_5$, Part (i) of \cite[(4.9)]{Shih} shows that in fact $r=3$. 
For $L=A_5$ and $r=4$, Part (ii) of \cite[(4.9)]{Shih} shows that three of the $g_i$'s are involutions, and the remaining, say $g_4$,  is of order $m\geq 3$. This case is ruled out using merely $u(S)\geq r-2=2$:  Indeed,  $u(g_i)\leq 11/20$ for $i=1,2,3$ by \cite[(4.9).(3)]{Shih} and 
$$u(g_4)\leq 1/m + \frac{m-1}{m}\cdot\frac{1}{n}\max_{1<i<m}\{f(g_4^i)\}$$
by \cite[(2.1).(2)]{Shih}. Since this maximum is at most $1/10$ by \cite[(4.7)]{Shih}, the right hand side is strictly smaller than $1/4+1/10=7/20$ if $m\geq 4$. For $m\geq 4$, in total we have $$u(S)=\sum_{i=1}^3 u(g_i) + u(g_4)< 3\cdot 11/20+7/20=2,$$ contradicting $u(S)\geq 2$. The case $m=3$ is ruled out in \cite[(4.9)]{Shih} already using merely the inequality $u(S)\geq 2$, as needed. 

Henceforth assume $r=3$ and let $k,\ell,m$ be the orders of $g_1,g_2,g_3$, respectively. 
Without loss of generality, we assume $k\leq \ell\leq m$. 
Then  \cite[(4.11)]{Shih} asserts that $(k,\ell,m)$ is one of the tuples $(2,3,m)$, $7\leq m\leq 18$, 
or $(2,4,m)$, $5\leq m\leq 35$, or $(2,5,m)$, $5\leq m\leq 10$, or $(2,6,m)$, $m=6,7,8$, or $(3,3,m)$, $m=4,5$, or $(3,4,4)$. The only estimate of $u(S)$ used are those in \cite[(4.10)]{Shih} whose proof applies in the same way when replacing the inequality $u(S)>1$ with the inequality $u(S)\geq 1$. 

Finally, \cite[(4.17)-(4.21)]{Shih} show that $t=1$:
This relies on \cite[(4.16)]{Shih} which applies the inequality $u(S)\geq 1$ in order to deduce that $(k,\ell,m)$ is $(2,3,8)$, or $(2,4,5)$ or $(2,4,6)$. However, we note that for $(k,\ell,m)=(2,3,7)$ or $(2,3,10)$ the estimates on $u(S)$ in the proof of \cite[(4.16)]{Shih} do not contradict even the inequality $u(S)>1$, leaving these cases open.  In these cases we refine the estimates as follows. Let $f(g)$ denote the number of fixed points of $g\in G$. If $(k,\ell,m)=(2,3,10)$, then  \cite[(2.1).(1)]{Shih} gives: 
\begin{equation}\label{equ:2310} u(g_3)\leq \frac{1}{10}(1+\frac{f(g_3^5)}{n}+4\frac{f(g_3^2)}{n}+4\frac{f(g_3)}{n}).
\end{equation}
One has $f(g_3^5)/n\leq 1/15$  as in \cite[(4.7).(3)]{Shih} for $L\neq A_5$, and $f(g_3^2)/n\leq 1/12$ by \cite[(4.6).(2)]{Shih}, and $f(g_3^2)/n\leq 1/60$ by the assumption of \cite[(4.16)]{Shih}. 
Thus, \eqref{equ:2310} gives $u(g_3)\leq 11/75$. As $u(g_1)+u(g_2)\leq 307/360$ as in \cite[(4.16)]{Shih}, one gets $u(S)\leq 1799/1800$, contradicting $u(S)\geq 1$. 
For $(k,\ell,m)=(2,3,7)$, \cite[(2.1).(1)]{Shih} gives 
$
u(g_3)\leq (1/7)(1+6f(g_3)/n).
$ As $f(g_3)/n\leq 1/60$ by assumption of \cite[(4.16)]{Shih}, one has $u(g_3)\leq 61/420$. As $u(g_1)+u(g_2)\leq 307/360$, this gives $u(S)\leq 503/504$, contradicting $u(S)\geq 1$. 
Now, \cite[(4.16)]{Shih} and the above addition in case $(k,\ell,m)=(2,3,7)$ or $(2,3,10)$ implies that the inequality $u(S)\geq 1$ suffices for \cite[(4.17)-(4.21)]{Shih}.

\end{proof}

\begin{proof}[Proof of Theorem \ref{thm:nonsolv-quot}]
We use the Aschbacher--O'Nan--Scott structure theory of primitive groups \cite[Theorem 11.2]{Gur}. 
Since $G$ is assumed to be nonaffine, $\soc(G)$ is isomorphic to a power $L^t$ of a nonabelian simple group $L$.   
Proposition \ref{prop:shih} shows that either $\soc(G)$ is the (unique) minimal normal subgroup, or $G/\soc(G)$ is solvable. 
Henceforth assume $\soc(G)$ is the unique minimal normal subgroup of $G$.

Consider the action of $G$ on the set $\Delta=\{L_1,\ldots,L_t\}$ of simple direct factors in $\soc(G)$, and let $K$ be its kernel. Since $2(g_X-1)<\deg f/1000$ and $G$ is nonaffine, \cite[Theorem 9.3]{GT} implies that either $G/K$ is solvable or
$G/K\cong A_5$ or $S_5$ with $t=5$.  
Since $L^t\leq K\leq \Aut(L)^t$ and $\Aut(L)/L$ is solvable by Schreier's conjecture, it follows that $G/\soc(G)\cong (G/K)/(K/\soc(G))$ is solvable when $G/K$ is. In particular, since $S_t$ is solvable for $t\leq 4$, we may henceforth assume $t\geq 5$.

\DN{In case $\soc(G)$ acts regularly, we follow the argument of \cite[Corollary 9.4]{GT}:} Since a nontrivial normal subgroup of a primitive group acts transitively, we have  $H\soc(G)=G$, where $H$ is a point stabilizer. Moreover, as $\soc(G)$ is regular,  we have $H\cap \soc(G)=1$ and $G=H\ltimes \soc(G)$. 
Thus $H$ acts transitively on $\Delta$, 
and hence the stabilizer $H_1$ of $L_1\in \Delta$ in this action is of index at least $t\geq 5$, while the kernel $H\cap K$ of \DN{this action is solvable since $H\cap \soc(G)=1$ and $K/\soc(G)$ is solvable. 
However}, the image of the action $H_1\ra\Aut(L_1)$ contains the group of inner automorphisms on $L_1$ by \cite[Theorem 1]{AS}\footnote{More specifically, see the definition of $n_1^*$ in \cite[Theorem 1]{AS}.}. 
\DN{As $H\cap K$ is solvable and is contained in $H_1$, it follows that $H/H\cap K$ contains a nonsolvable subgroup $H_1/H_1\cap K$ of index $\geq 5$. Thus $H/H\cap K\not\cong A_5,S_5$ and is nonsolvable. As $G/K\cong H/H\cap K$ (since $HK=G$) the same conclusion holds for $G/K$, contradicting the above conclusion from \cite[Theorem 9.3]{GT}.}

Henceforth assume $\soc(G)$ is not regular, that is, $H\cap \soc(G)\neq 1$. 
The coverings for which $H\cap L_1= 1$ and $2(g_X-1)<\deg f/168$,  were classified in \cite[Theorem]{Asch}. 
As explained in \cite[(11.1) and (19.1)]{Asch}, the resulting coverings have monodromy group $G\leq S_5\wr S_2$ with  $\soc(G)=A_5^2$, so that $G/\soc(G)$ is solvable. 

Henceforth assume $H\cap L_1\neq 1$, in which case the group is called of product type. In this case,  as $g_X\leq 1$, \cite[Theorem~7.1]{GN}  implies  that\footnote{\cite[Theorem 9.3]{GT} allows a case where $A_5^5<G\leq S_5\wr S_5$, and $G/K\cong S_5$, and the covering $\tilde X/K\ra\mP^1$, obtained as the quotient of the Galois closure $\tilde X$ of $f$ by $K$, has three branch points with branch cycles $x_1,x_2,x_3$ of orders $2,4,6$, respectively. A straightforward computer check shows that there is no  genus $\leq 1$  product $1$ tuple $x_1,x_2,x_3$ generating such $G$, ruling out this case.}  either $G/\soc(G)\cong A_5$ such that its regular action is of genus $0$, or that $G\leq S_\ell \wr S_5$  with $\ell \leq 10$ and $G/\soc(G)\cong S_5$ with regular action of ramification type $[2^{60}], [4^{30}], [5^{24}]$. 
The first case is ruled out by \cite[Theorem 8.6]{GN}, whereas a straightforward computer check shows that the second case does not occur  with genus $g_X\le 1$.
\end{proof}

\bibliographystyle{plain}

\end{document}